\begin{document}



\setlength{\parindent}{5mm} \setlength{\marginparwidth}{30mm}
\renewcommand{\leq}{\leqslant}
\renewcommand{\geq}{\geqslant}
\newcommand{\N}{\mathbb{N}}
\newcommand{\Z}{\mathbb{Z}}
\newcommand{\R}{\mathbb{R}}
\newcommand{\C}{\mathbb{C}}
\newcommand{\F}{\mathbb{F}}
\newcommand{\g}{\mathfrak{g}}
\newcommand{\h}{\mathfrak{h}}
\newcommand{\K}{\mathbb{K}}
\newcommand{\RN}{\mathbb{R}^{2n}}
\newcommand{\ci}{C^{\infty}}
\renewcommand{\S}{\mathbb{S}}
\renewcommand{\H}{\mathbb{H}}
\newcommand{\eps}{\varepsilon}

\newcommand{\notemarge}[1]{\marginpar{\scriptsize{#1}}}

\theoremstyle{plain}
\newtheorem{theo}{Theorem}
\newtheorem{prop}[theo]{Proposition}
\newtheorem{lemma}[theo]{Lemma}
\newtheorem{definition}[theo]{Definition}
\newtheorem*{notation*}{Notation}
\newtheorem*{notations*}{Notations}
\newtheorem{corol}[theo]{Corollaire}
\newtheorem{conj}[theo]{Conjecture}
\newtheorem{question}[theo]{Question}
\newtheorem*{question*}{Question}
\newenvironment{demo}[1][]{\addvspace{8mm} \emph{Proof #1.
    ---~~}}{~~~$\Box$\bigskip}

\newlength{\espaceavantspecialthm}
\newlength{\espaceapresspecialthm}
\setlength{\espaceavantspecialthm}{\topsep} \setlength{\espaceapresspecialthm}{\topsep}

\newenvironment{example}[1][]{\refstepcounter{theo} 
\vskip \espaceavantspecialthm \noindent \textsc{Example~\thetheo
#1.} }%
{\vskip \espaceapresspecialthm}

\newenvironment{remark}[1][]{
\vskip \espaceavantspecialthm \noindent \textsc{Remark.~}}
{\vskip \espaceapresspecialthm}

\def\Homeo{\mathrm{Homeo}}
\def\Diff{\mathrm{Diff}}
\def\Symp{\mathrm{Symp}}
\def\Fib{\mathrm{Fib}}
\def\Emb{\mathrm{Emb}}
\def\Id{\mathrm{Id}}
\newcommand{\norm}[1]{||#1||}

\title{On the geometry of the space of fibrations}
\author{Vincent Humili\`ere$^1$ and Nicolas Roy$^2$}
\normalsize \maketitle \footnotetext[1]{Ludwig-Maximilian Universität, Munich, Germany. Partially supported by the ANR,
project "Symplexe". Email: \texttt{vincent.humiliere@mathematik.uni-muenchen.de}} \footnotetext[2]{Humbolt universität zu Berlin, Germany. Email: \texttt{roy@math.hu-berlin.de}}

\abstract{We study geometrical aspects of the space of fibrations between two given manifolds $M$ and $B$, from the
point of view of Fréchet geometry. As a first result, we show that any connected component of this space is the base
space of a Fréchet-smooth principal bundle with the identity component of the group of diffeomorphisms of $M$ as total
space. Second, we prove that the space of fibrations is also itself the total space of a smooth Fréchet principal
bundle with structure group the group of diffeomorphisms of the base $B$.}

\section{Introduction and results}\label{section intro}

The aim of this paper is to study some geometrical properties of the space $\Fib(M,B)$ of all smooth fibrations
$\pi:M\to B$, with $M$ and $B$ smooth finite-dimensional manifolds. By "fibration" we always mean a locally trivial
fiber bundle. According to Ehresmann Theorem \cite{ehresmann}, $\Fib(M,B)$ is nothing but the space of all smooth
surjective submersions from $M$ to $B$. This space is known to be an open subset of the Fréchet manifold of all smooth
maps $\ci(M,B)$, provided $M$ is closed (see e.g. \cite{hamilton}, p. 85). Throughout the paper, $M$ (and $B$) will
always be assumed to be closed and we will study $\Fib(M,B)$ in the framework of Fréchet differential geometry (see
\cite{michor_book} or \cite{hamilton} for comprehensive introductions).

This work is actually part of a larger project dealing with the study of the space of Lagrangian fibrations of
symplectic manifolds, in order to derive applications to the theory of Hamiltonian completely integrable systems (this
is the topic of subsequent papers \cite{roy_11,humiliere_roy_1}). The present article consists of a preliminary study
of the general (non-Lagrangian) case.




\medskip
Suppose $\Fib(M,B)$ is non empty and let $\pi_0\in\Fib(M,B)$ be a fibration. A $\pi_0$-\emph{vertical} diffeomorphism
$\phi$ is a diffeomorphism of $M$ which lifts the identity of $B$, i.e., which satisfies
$$\pi_0\circ\phi=\pi_0.$$  We denote by $\Diff(M)$ the group of diffeomorphisms of $M$, by $\Diff_0(M)$ its identity
component and by $\mathcal{G}_{\pi_0}$ the subgroup of all $\pi_0$-vertical diffeomorphisms. We prove two independent
theorems on the geometry of $\Fib(M,B)$. The first one relates the diffeomorphism group of $M$ and $\Fib(M,B)$.

\begin{theo}\label{theorem diff/fib} Let $\pi_0\in\Fib(M,B)$.
\begin{enumerate}
\item The action of $\mathcal{G}_{\pi_0}$ (resp. $\mathcal{G}_{\pi_0}\cap\Diff_0(M)$) on $\Diff(M)$ (resp. $\Diff_0(M)$) by right composition gives $\Diff(M)$ (resp. $\Diff_0(M)$)
the structure of a Fréchet principal bundle.
\item The connected component of $\pi_0$ in $\Fib(M,B)$ is naturally Fréchet diffeomorphic to the quotient space
$\Diff_0(M)/(\mathcal{G}_{\pi_0}\cap\Diff_0(M))$.
\end{enumerate}
\end{theo}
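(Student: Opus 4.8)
The plan is to deduce both parts from a single smooth map together with a Moser-type path-lifting argument. Consider $p\colon\Diff(M)\to\ci(M,B)$, $p(\phi)=\pi_0\circ\phi^{-1}$. It is smooth (composition and inversion being smooth on $\Diff(M)$ since $M$ is closed), it lands in $\Fib(M,B)$ (a submersion composed with a diffeomorphism), and $p(\phi\circ g)=p(\phi)$ for $g\in\mathcal{G}_{\pi_0}$ because $\pi_0\circ g^{-1}=\pi_0$. The right $\mathcal{G}_{\pi_0}$-action is free, and $p(\phi)=p(\psi)$ if and only if $\psi^{-1}\circ\phi\in\mathcal{G}_{\pi_0}$, so the fibres of $p$ are precisely its orbits; thus $p$ identifies $\Diff(M)/\mathcal{G}_{\pi_0}$ with $p(\Diff(M))\subset\Fib(M,B)$. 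Everything then reduces to producing a \emph{smooth local section of $p$ near $\pi_0$}: by the left-translation symmetry $\psi\mapsto\phi_0\circ\psi$, which commutes with the right $\mathcal{G}_{\pi_0}$-action and carries $p^{-1}$ of a neighbourhood of $\pi_0$ onto $p^{-1}$ of a neighbourhood of $p(\phi_0)$, such sections then exist near every point.

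Next I would prove a path-lifting lemma. Fix a Riemannian metric on $M$. Given a smooth path $t\mapsto\pi_t$ in $\Fib(M,B)$ starting at $\pi_0$, I look for $\phi_t\in\Diff_0(M)$ with $\phi_0=\Id$ and $\pi_t\circ\phi_t=\pi_0$. Writing $\phi_t$ as the flow of a time-dependent vector field $X_t$, differentiation turns this into the pointwise equation $(d\pi_t)_y(X_t(y))=-(\partial_t\pi_t)(y)$; since $\pi_t$ is a submersion, $(d\pi_t)_y$ restricts to an isomorphism from the orthogonal complement of $\ker(d\pi_t)_y$ onto $T_{\pi_t(y)}B$, and defining $X_t(y)$ as the preimage of $-(\partial_t\pi_t)(y)$ there gives a smooth time-dependent vector field on the closed manifold $M$, whose flow yields the desired $\phi_t$. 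Applying this to a path joining $\pi_0$ to an arbitrary $\pi$ in its connected component, and noting that the same lift started from any $\phi\in\Diff_0(M)$ over a given fibration exhibits $p(\Diff_0(M))$ as a union of path-components, one gets that $p(\Diff_0(M))$ is exactly the connected component of $\pi_0$, and similarly $p(\Diff(M))$ is a union of connected components of $\Fib(M,B)$.

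To build the local section, fix also a metric on $B$ and, for $\pi$ in a sufficiently small (hence Fréchet-open, as the submersion condition is $C^1$-open) neighbourhood of $\pi_0$, set $\pi_s(x)=\exp^B_{\pi_0(x)}\bigl(s\,(\exp^B_{\pi_0(x)})^{-1}(\pi(x))\bigr)$ for $s\in[0,1]$; this is a smooth path in $\Fib(M,B)$ from $\pi_0$ to $\pi$, depending smoothly on $\pi$ and constant when $\pi=\pi_0$. Feeding it into the path-lifting lemma and putting $\sigma(\pi):=\phi_1$ produces a map $\sigma$ from a neighbourhood $\mathcal{U}$ of $\pi_0$ to $\Diff(M)$ with $p\circ\sigma=\Id$ and $\sigma(\pi_0)=\Id$; its smoothness follows from smooth dependence on parameters of flows of smooth families of vector fields in the Fréchet framework. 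This is the required section.

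It remains to assemble the conclusions. One first checks that $\mathcal{G}_{\pi_0}$, and $\mathcal{G}_{\pi_0}\cap\Diff_0(M)$, are Fréchet Lie groups: over trivialising opens of $B$ a $\pi_0$-vertical diffeomorphism is a smooth family of diffeomorphisms of the fibre, so $\mathcal{G}_{\pi_0}$ is the section group of a bundle of Lie groups over $B$. The local section together with left translations then gives, around each point, a trivialisation $\mathcal{U}\times\mathcal{G}_{\pi_0}\xrightarrow{\ \sim\ }p^{-1}(\mathcal{U})$, $(\pi,g)\mapsto\sigma(\pi)\circ g$, with smooth inverse $\phi\mapsto(p(\phi),\sigma(p(\phi))^{-1}\circ\phi)$; these charts are smoothly compatible, so $p(\Diff(M))$ becomes a Fréchet manifold and $p$ a Fréchet principal $\mathcal{G}_{\pi_0}$-bundle, and likewise $\Diff_0(M)\to p(\Diff_0(M))$, which proves (1). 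For (2), the restriction of $p$ to $\Diff_0(M)$ descends to a smooth bijection from $\Diff_0(M)/(\mathcal{G}_{\pi_0}\cap\Diff_0(M))$ onto the connected component of $\pi_0$, which admits local sections and is therefore a Fréchet diffeomorphism. The main obstacle I anticipate is the smoothness of $\sigma$ in $\pi$ — that is, smooth dependence on parameters of the flow built in the lemma, plus checking that $y\mapsto (d\pi_t)_y^{-1}$ on the orthogonal complement is a smooth bundle map depending smoothly on $\pi_t$ — together with the more routine verification that $\mathcal{G}_{\pi_0}$ is a splitting Fréchet Lie subgroup of $\Diff(M)$.
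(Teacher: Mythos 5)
Your plan is sound but takes a genuinely different route from the paper. Where the paper constructs the local trivialisations directly, by building a $\mathcal{G}_{\pi_0}$-invariant tubular neighbourhood of the subbundle $N=\{(x,y)\in M\times M\mid\pi_0(x)=\pi_0(y)\}$, reading off a transverse slice $\mathcal{S}$, and defining an explicit chart $\Phi$ made only of compositions and inversions of diffeomorphisms, you instead produce a local section of $p\colon\phi\mapsto\pi_0\circ\phi^{-1}$ by a Moser-type path-lifting argument and then invoke smooth dependence of flows on parameters. Both yield the required equivariant splitting (paper: $\tilde{\mathcal{U}}\cong\mathcal{S}\times\mathcal{G}_{\pi_0}$; you: $p^{-1}(\mathcal{U})\cong\mathcal{U}\times\mathcal{G}_{\pi_0}$), but they rest on different analytic prerequisites: the paper needs only smoothness of composition and inversion on $\Diff(M)$ plus finite-dimensional tubular-neighbourhood geometry (its Lemma on $p$-vertical tubular neighbourhoods), whereas you additionally need the Fr\'echet-smoothness of the flow operator $\mathfrak{X}(M)\to\Diff(M)$ and of the assignment $\pi\mapsto(X_t)_t$ obtained by inverting $d\pi_t$ on the orthogonal complement of $\ker d\pi_t$ --- genuine but standard facts which you rightly flag as the delicate point. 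For part (2), the paper relies on a separate device (Roy's lemma comparing sections of two fibrations with a common section, again via tubular neighbourhoods), whereas your Moser section immediately supplies the local inverse of the induced map on the quotient; your route also gives the transitivity of $\Diff_0(M)$ on the component of $\pi_0$ directly from path-lifting rather than by chaining local inverses along a finite subdivision. One spot where your sketch is thinner than it should be is the verification that $\mathcal{G}_{\pi_0}$ is a split Fr\'echet Lie subgroup: the paper's identification of $\mathcal{G}_{\pi_0}$ with $\widehat{\Diff}(M)\cap\Gamma(M,N)$, using that sections of a subbundle form a Fr\'echet submanifold of the sections of the ambient bundle, is the clean way to make this rigorous, and your ``bundle of Lie groups over $B$'' remark would need to be sharpened into a statement of that kind.
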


The first and second parts of this theorem are proved respectively in Sections \ref{section diff/fib} and \ref{section
fibrations as fibers}.


\begin{remark}The action of $\mathcal{G}_{\pi_0}$ by left composition leads also to a principal bundle structure, naturally obtained from the
previous one by conjugation by the inversion of diffeomorphisms.
\end{remark}

\begin{remark}The diffeomorphism that we construct to prove the second part of Theorem \ref{theorem diff/fib} is
induced by the map
$$\Diff_0(M)\to\Fib(M,B),\quad\phi\mapsto\pi_0\circ\phi^{-1}.$$
Note that in general, the analogous map defined on $\Diff(M)$ is not surjective onto $\Fib(M,B)$, so that there is no
similar result without the connected components assumptions. Indeed, for some manifolds $M,B$ one can find two
fibrations $\pi_1,\pi_2:M\to B$ with non-diffeomorphic fibers, which therefore can not satisfy $\pi_1=\pi_2\circ\phi$
for any $\phi\in\Diff(M)$.
 For instance, let $M=\S_3\times\S_2$, $B=\S_2$, $\pi_1$ be the
canonical projection onto its second factor and $\pi_2$ be the composition of the Hopf fibration $\S_3\to\S_2$ with the
projection onto the first factor. The fiber of $\pi_1$ is then $\S_3$ while the fiber of $\pi_2$ is $\S_1\times\S_2$.

One immediate consequence of Theorem \ref{theorem diff/fib} is that $\Diff_0(M)$ acts transitively on each connected
component of $\Fib(M,B)$. This was already proved by Michor in \cite{michor} using the Nash-Moser implicit function
Theorem. On the contrary, our proof is based on explicit constructions.

A corollary of this transitivity property is the following lemma, for which we can also give a very simple and direct
proof:

\begin{lemma}Two fibrations $\pi_0$, $\pi_1$ lying in the same connected component of $\Fib(M,B)$ have
diffeomorphic fibers.
\end{lemma}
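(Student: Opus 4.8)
The plan is to give the quick deduction from Theorem~\ref{theorem diff/fib} and then to spell out the self-contained argument the authors allude to. For the quick route: by Theorem~\ref{theorem diff/fib} (more precisely by the transitivity of $\Diff_0(M)$ on connected components noted just above), there is $\phi\in\Diff_0(M)$ with $\pi_1=\pi_0\circ\phi^{-1}$. Then for every $b\in B$ one has $\phi(\pi_0^{-1}(b))=\pi_1^{-1}(b)$, and $\phi$ restricts to a diffeomorphism between these two submanifolds of $M$; in particular $\pi_0^{-1}(b)\cong\pi_1^{-1}(b)$. This settles the statement but uses the full strength of Theorem~\ref{theorem diff/fib}.

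For the promised direct proof I would argue as follows. Since $\Fib(M,B)$ is open in the Fr\'echet manifold $\ci(M,B)$, each of its connected components is open, hence smoothly arcwise connected; choose a smooth path $t\mapsto\pi_t$, $t\in[0,1]$, joining $\pi_0$ to $\pi_1$ inside the given component, and (after reparametrising so that it is constant near the endpoints) extend it to a slightly larger open interval, which removes any discussion of boundaries. Now form
\[
F\colon\; I\times M\to I\times B,\qquad F(t,x)=(t,\pi_t(x)),
\]
where $I$ is that open interval. Its differential maps $T_xM$ onto $\{0\}\times T_{\pi_t(x)}B$ because each $\pi_t$ is a submersion, and sends $\partial_t$ to a vector with nonzero first component, so $F$ is a submersion; it is surjective because each $\pi_t$ is; and, since $M$ is closed, $F$ is proper over $[0,1]\times B$. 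By Ehresmann's theorem $F$ is a locally trivial fibre bundle over the connected base $[0,1]\times B$, so all its fibres are mutually diffeomorphic. As $F^{-1}(t,b)=\{t\}\times\pi_t^{-1}(b)$, comparing $t=0$ and $t=1$ gives $\pi_0^{-1}(b)\cong\pi_1^{-1}(b)$.

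So the steps in order would be: (1) produce a smooth path $\pi_t$ in the component, defined on an open interval; (2) assemble the auxiliary map $F$ and verify it is a proper surjective submersion; (3) apply Ehresmann's theorem to conclude $F$ is locally trivial; (4) read off the diffeomorphism of fibres from connectedness of the base. The only point requiring care is (2)--(3): one must use that it is the \emph{total} map $F$ — not merely each individual $\pi_t$ — that is a proper submersion, this being exactly what the family version of Ehresmann's theorem needs; the potential nuisance of working over a compact interval with boundary is precisely what the preliminary extension to an open interval is designed to avoid.
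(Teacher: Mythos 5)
Your self-contained argument is essentially the paper's proof: the authors also assemble the family map $(s,x)\mapsto(s,\Pi(s)(x))$ and invoke Ehresmann's theorem, the only cosmetic difference being that they parametrise by a smooth \emph{loop} $\S^1\to\Fib(M,B)$ through $\pi_0$ and $\pi_1$ (so the base $\S^1\times B$ is automatically a closed manifold, sidestepping the boundary issue you handle by extending to an open interval). Your quick deduction from the transitivity in Theorem~\ref{theorem diff/fib} matches the remark preceding the lemma in the paper, and both routes are correct.
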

\begin{proof}Since $\Fib(M,B)$ is a smooth Fréchet manifold, one can find a smooth loop $\Pi$ in $\Fib(M,B)$ going through $\pi_0$ and
$\pi_1$. This loop then defines a map $$\hat{\Pi}:\S^1\times M\to \S^1\times B,\quad (s,x)\mapsto(s,\Pi(s)(x))$$ which
is a submersion. Indeed, its differential is everywhere upper-triangular with submersive diagonal blocks. Then,
according to Ehresmann Theorem \cite{ehresmann}, $\hat{\Pi}$ is a fibration and in particular its fibers are all
diffeomorphic to each other. In particular the fibers of $\pi_0$ are diffeomorphic to those of $\pi_1$.
\end{proof}

\end{remark}

The second theorem concerns the action of $\Diff(B)$ on $\Fib(M,B)$ by left composition. It was inspired by Michor's
article \cite{michor3} about the principal bundle structure of the space of embeddings.

\begin{theo}\label{theorem fib is fibered} Let $\pi_0\in\Fib(M,B)$. Suppose that $\pi_0$ admits a global section.
Then, the action of $\Diff_0(B)$ by left composition gives the connected component of $\pi_0$ in $\Fib(M,B)$ the
structure of a Fréchet $\Diff_0(B)$-principal bundle.
\end{theo}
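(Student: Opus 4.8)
The plan is to realize the connected component of $\pi_0$, call it $\mathcal{F}_0$, as the total space of a principal $\Diff_0(B)$-bundle over the moduli space $\mathcal{F}_0/\Diff_0(B)$ of "unparametrized" fibrations in this component. The group acts freely: if $\psi\circ\pi=\pi$ for a surjective submersion $\pi$, then $\psi$ is the identity on the (open, dense, in fact full) image $\pi(M)=B$, so $\psi=\Id_B$; hence there are no isotropy issues and the only real content is the existence of local sections, i.e. local slices transverse to the orbits. The hypothesis that $\pi_0$ admits a global section $\sigma_0:B\to M$ is exactly what lets us build such a slice explicitly, in the spirit of Michor's treatment of the space of embeddings in \cite{michor3}: a global section turns the "reparametrize the base" action into something one can split off concretely.

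First I would set up the local model. Given $\pi$ near $\pi_0$ in $\mathcal{F}_0$, the composite $\pi\circ\sigma_0:B\to B$ is $C^\infty$-close to $\pi_0\circ\sigma_0=\Id_B$, hence is itself a diffeomorphism in $\Diff_0(B)$; call it $\tau(\pi)$. The map $\pi\mapsto\tau(\pi)$ is smooth in the Fr\'echet sense (composition and inversion are smooth on $\Diff_0(B)$, and evaluation/composition with the fixed $\sigma_0$ is smooth on $\ci(M,B)$), it is $\Diff_0(B)$-equivariant for the left action, and it fixes $\pi_0$. Therefore $\pi\mapsto \tau(\pi)^{-1}\circ\pi$ defines a smooth retraction of a neighborhood $\mathcal{U}$ of $\pi_0$ onto the "slice" $$\mathcal{S}:=\{\pi\in\mathcal{U}: \pi\circ\sigma_0=\Id_B\},$$ and every $\pi\in\mathcal{U}$ factors uniquely as $\pi=\tau(\pi)\circ\pi'$ with $\pi'\in\mathcal{S}$ and $\tau(\pi)\in\Diff_0(B)$. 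This yields a local trivialization $\mathcal{U}\xrightarrow{\ \sim\ }\mathcal{S}\times\Diff_0(B)$, $\pi\mapsto(\tau(\pi)^{-1}\circ\pi,\ \tau(\pi))$, which is $\Diff_0(B)$-equivariant (the group acting on the second factor by left translation). Translating $\mathcal{U}$ around by the group action produces such trivializations over a neighborhood of every point of the orbit of $\pi_0$, and transitivity of $\Diff_0(M)$ on $\mathcal{F}_0$ from Theorem \ref{theorem diff/fib}(ii) — or rather connectedness plus homogeneity under $\Diff_0(B)$-action combined with Theorem \ref{theorem diff/fib} — is what propagates the construction to all of $\mathcal{F}_0$; in fact one gets local sections near an arbitrary $\pi_1$ by conjugating the $\sigma_0$-construction with a diffeomorphism of $M$ carrying $\pi_0$ to $\pi_1$, which furnishes a section $\sigma_1$ of $\pi_1$.

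The remaining points are bookkeeping: one checks that $\mathcal{S}$ is a smooth Fr\'echet submanifold of $\ci(M,B)$ (it is cut out by the smooth constraint $\mathrm{ev}_{\sigma_0}(\pi)=\Id_B$, whose differential is a split surjection onto $\ci(B,TB)$-type sections because $\sigma_0$ is an embedding, so the implicit function theorem for tame/Fr\'echet manifolds — or simply the open-subset-of-a-closed-affine-subspace description — applies), that the quotient $\mathcal{F}_0/\Diff_0(B)$ inherits a smooth Fr\'echet manifold structure with charts modeled on the slices $\mathcal{S}$, and that the transition maps between overlapping trivializations take values in smooth maps into $\Diff_0(B)$, which follows formally from equivariance. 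The main obstacle I anticipate is not the algebra of the factorization but the smoothness and splitting claim for the slice $\mathcal{S}$ — i.e. verifying that $\pi\mapsto\pi\circ\sigma_0$ is a submersion of Fr\'echet manifolds at $\pi_0$ with complemented kernel, so that $\mathcal{S}$ genuinely is a manifold and the local product decomposition is smooth both ways; this is where the global-section hypothesis does its real work and where one must be careful about which category of Fr\'echet calculus (convenient calculus \`a la \cite{michor_book}, or Nash–Moser) is being used.
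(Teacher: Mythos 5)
Your proposal is essentially the same construction the paper uses: the slice $\mathcal{S}=\{\pi:\pi\circ\sigma=\Id_B\}$, the $\Diff_0(B)$-invariant open set $\mathcal{W}=\{\pi:\pi\circ\sigma\in\Diff_0(B)\}$, and the equivariant trivialization $\pi\mapsto(\pi\circ\sigma,(\pi\circ\sigma)^{-1}\circ\pi)$, followed by an appeal to Lemma~\ref{lemma trivialisation->principal bundle} and translation to other base points via the $\Diff_0(M)$-action (which commutes with the $\Diff_0(B)$-action and transports $\sigma$ along).

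The one place you leave a genuine gap is exactly the place you flag yourself: proving that $\mathcal{S}$ is a Fr\'echet submanifold. You propose either a tame/Nash--Moser implicit function theorem applied to $\pi\mapsto\pi\circ\sigma$, or an unspecified ``open-subset-of-a-closed-affine-subspace'' argument. The paper deliberately avoids any implicit function theorem. Instead it picks a local chart $\Psi:\mathcal{U}\to\Gamma(\pi_0^*TB)$ with the extra property that $\pi(x)=\pi_0(x)$ iff $\Psi(\pi)(x)=0$, so that the constraint $\pi\circ\sigma=\Id_B$ becomes, in the chart, the \emph{linear} condition that the section vanish on $\Sigma=\sigma(B)$. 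It then exhibits an explicit closed complement to $\Gamma_\Sigma(\pi_0^*TB)$ inside $\Gamma(\pi_0^*TB)$, namely the pullbacks $\widetilde{\chi}=\chi\circ\pi_0$ of vector fields $\chi$ on $B$, with the projection $s\mapsto\widetilde{s\circ\sigma}$; this is where the global section is really used. So the paper's version of your step is elementary (a closed complemented linear subspace in a chart), whereas your first suggested route would bring in Nash--Moser machinery that the rest of the argument does not need, and your second suggestion needs the adapted chart and the explicit complement to actually go through. If you supply that complement argument, your proof and the paper's coincide.

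One minor point of emphasis: your opening check that the action is free, while correct, is not needed as a separate step once you have the equivariant product decomposition $\mathcal{W}\simeq\Diff_0(B)\times\mathcal{S}$, since freeness is then automatic; the paper accordingly does not state it.
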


We will prove this theorem in Section \ref{section fib is fibered}.

\begin{remark} Intuitively, two fibrations in the same orbit under the action of $\Diff_0(B)$ define the same foliation of $M$,
so that the quotient space can be viewed as the space of bundle-like foliations.\end{remark}

\begin{remark} Theorem \ref{theorem fib is fibered} still holds when one replaces $\Diff_0(B)$ by $\Diff(B)$ and the
connected component of $\pi_0$ by the subset of $\Fib(M,B)$ consisting in all fibrations admitting a global section.
This subset is a union of connected components as follows easily from Theorem \ref{theorem diff/fib}.
\end{remark}

\begin{remark}What happens when $\pi_0$ does not admit any global section remains an open problem.

Strangely enough, when $M$ is symplectic and we consider the smaller space of Lagrangian fibrations, we have been able
to prove the existence of the principal bundle structure (under the action of the whole $\Diff(B)$) without any
assumption on the existence of a global section \cite{humiliere_roy_1}. The methods used therein involve the Nash-Moser
Theorem but unfortunately do not apply here.
\end{remark}

\subsubsection*{Acknowledgements}A large part of this work has been done during our "Research in Pair" stay at the MFO research center in Oberwolfach in February 2009.
We thank the MFO for its warm hospitality and for the perfect working conditions it provided to us.

\section{Preparation Lemmas}\label{section lemmes prepa}

In this section, we prove two independent lemmas that will be useful in the proof of both Theorems \ref{theorem
diff/fib} and \ref{theorem fib is fibered}.

\begin{lemma}
\label{lem-voisinage-pratique}Let $p:X\rightarrow B$ be a fiber bundle and $Y\subset X$ a subbundle, i.e., a submanifold
of $X$ such that the restriction of $p$ to it defines a fiber bundle over $B$. Then, there exists a tubular
neighbourhood $U$ of $Y$, whose associated projection $q:U\rightarrow Y$ is $p$-vertical, i.e.,\[ p\circ q=p.\]
\end{lemma}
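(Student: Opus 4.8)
The plan is to build the tubular neighbourhood \emph{fibrewise}, which makes $p$-verticality automatic: if the retraction sends each fibre $X_b=p^{-1}(b)$ into itself, then $p\circ q=p$ is immediate. Write $V:=\ker dp\subset TX$ for the vertical subbundle; since $p$ is a submersion this is a smooth subbundle, and since $p|_Y:Y\to B$ is again a submersion, $VY:=TY\cap V$ is a smooth subbundle of $V|_Y$ (the vertical tangent bundle of $Y\to B$). First I would choose, using a partition of unity on $X$, a smooth fibrewise metric, i.e.\ a section $g$ of $S^2V^*$ whose restriction $g_b$ to each fibre $X_b$ is Riemannian. Let $N:=(VY)^{\perp_g}\subset V|_Y$ be the $g$-orthogonal complement of $VY$ inside $V|_Y$; this ``vertical normal bundle'' of $Y$ is a smooth vector bundle over $Y$ whose rank equals $\dim X_b-\dim Y_b$.

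Next I would introduce the fibrewise exponential map. For each $b$, let $\exp^{b}$ be the exponential map of the Riemannian manifold $(X_b,g_b)$; these assemble into a map $E$ sending $v\in N_y$ (for $y\in Y_b$) to $\exp^{b}_y(v)\in X_b$, defined on a neighbourhood of the zero section of $N$. Joint smoothness of $E$ in all variables, including the base point $b$, follows from smooth dependence on parameters and initial conditions of solutions of the geodesic ODE, whose coefficients depend smoothly on $b$. Along the zero section $E$ is the inclusion $Y\hookrightarrow X$, and, identifying $T_{0_y}N\cong T_yY\oplus N_y$, a direct computation shows $dE_{0_y}$ is the map $(u,w)\mapsto u+w\in T_yX$. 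This is a linear isomorphism: the dimensions agree, since $\dim(T_yY\oplus N_y)=(\dim B+\dim Y_b)+(\dim X_b-\dim Y_b)=\dim X$, and it is injective because $u+w=0$ forces $dp(u)=0$, i.e.\ $u\in VY_y$, and then $u=w=0$ as $w\perp VY_y$.

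Then, by the standard tubular neighbourhood argument (the inverse function theorem along $Y$, followed by shrinking $N$ near the zero section to achieve global injectivity), $E$ restricts to a diffeomorphism from an open neighbourhood $N'$ of the zero section onto an open neighbourhood $U\supset Y$ in $X$. Set $q:=\rho\circ (E|_{N'})^{-1}:U\to Y$, where $\rho:N\to Y$ is the bundle projection; this is a smooth retraction, so $(U,q)$ is a tubular neighbourhood of $Y$. Finally, since for $y\in Y_b$ the image $E(N'\cap N_y)$ lies in the fibre $X_b$ (geodesics of $g_b$ stay in $X_b$), any $x\in U\cap X_b$ satisfies $q(x)\in Y\cap X_b$, hence $p(q(x))=b=p(x)$; thus $p\circ q=p$, as required.

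I expect the only genuine subtlety to be the joint smoothness of $E$ in the base parameter, together with the classical shrinking step needed to pass from a local statement to a tubular neighbourhood of all of $Y$; the rest is simply the fibrewise version of the usual construction. (One could alternatively argue via compatible local trivialisations of $X\to B$ and $Y\to B$ over charts of $B$, but gluing the resulting local retractions is less transparent than averaging the metrics $g_b$ and using $\exp^b$.)
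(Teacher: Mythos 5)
Your proposal is correct and follows essentially the same route as the paper: equip the vertical bundle $V=\ker dp$ with a smooth family of fibre metrics, take the orthogonal complement of the vertical tangent to $Y$ inside $V|_Y$ as the (vertical) normal bundle, and use the fibrewise exponential maps $\exp^b$ to build the tubular neighbourhood, so that $p$-verticality is automatic because geodesics of $g_b$ stay in $X_b$. The only cosmetic difference is that you work directly with $(VY)^{\perp_g}$ where the paper first introduces the abstract normal bundle $TX/TY$ and then identifies it with this orthogonal complement; your version of the linearisation computation and the remark about joint smoothness in $b$ and the global shrinking step are slightly more explicit than the paper's, but the idea is identical.
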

\begin{figure}[!h]
\centering
\includegraphics[scale=0.6]{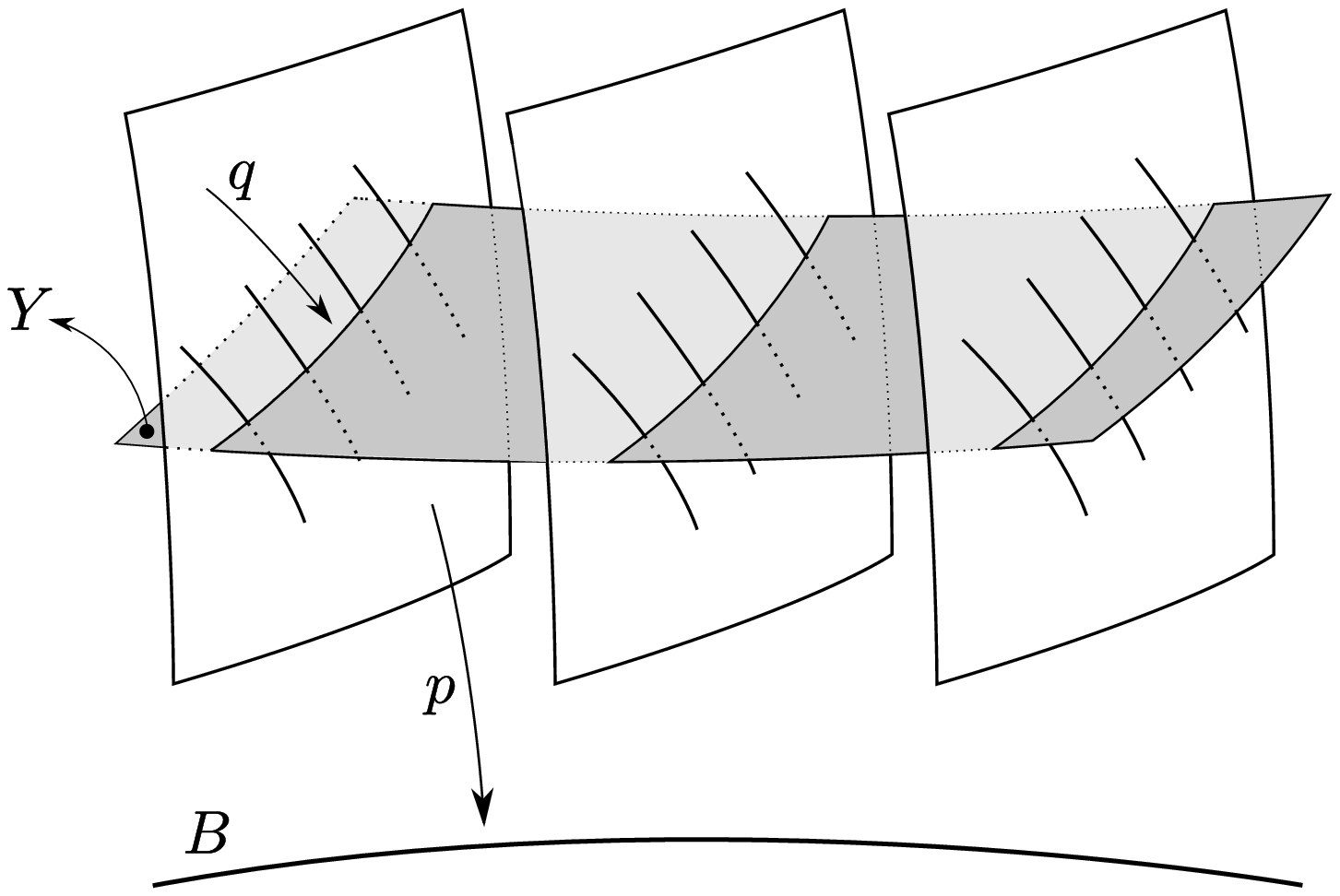}
\label{double_fibration}
\end{figure}

\begin{remark}
Notice that in the case where $X\rightarrow B$ is a vector bundle, one can construct this tubular neighbourhood with
the help of {}``local additions'', as defined in \cite{michor3}.\end{remark}
\begin{proof}
The usual construction of tubular neighbourhoods is the following. We first consider the normal bundle $N_{Y}$ to $Y$,
which is a vector bundle over $Y$. Then, with the help of a Riemannian metric on $X$, we can construct a diffeomorphism
$\phi$ from a neighbourhood of the $0$-section in $N_{Y}$ to a neighbourhood of $Y$ in $X$. Namely, one identifies any
element of $N_{Y}$ with a tangent vector on $X$ which is orthogonal to $Y$, and the exponential map provides a point in
$X$. Then, the projection $q$ corresponds through the diffeomorphism $\phi$ to the projection $N_{Y}\rightarrow Y$.
Unfortunately, constructed in this way, $q$ has no reason to be $p$-vertical.

Instead of fixing a metric on $X$ and using the corresponding exponential map to construct $\phi$, we rather take a
smooth family of metrics $\left(g_{b}\right)_{b\in B}$ on the fibers $X_{b}=p^{-1}\left(b\right)$. Such a family can be
obtained for example by restricting a given metric $g$ on $X$ to each fiber $X_{b}$. Then, at each point $x\in Y$, we
denote $b=p\left(x\right)$ and we identify $N_{Y}\left(x\right)$ with the vector space $W_{x}$ orthogonal to
$T_{x}\left(Y\cap X_{b}\right)$ in $T_{x}X_{b}$, with respect to the metric $g_{b}$ on $X_{b}$, simply by
\begin{eqnarray*}
W_{x} & \rightarrow & N_{Y}\left(x\right)=T_{x}X/T_{x}Y\\
V & \rightarrow & \left[V\right].\end{eqnarray*} This is injective since the intersection of $W_{x}$ with $T_{x}Y$ is
trivial. It is also surjective because of dimension matching. Indeed, the dimension of $N_{Y}\left(x\right)$ equals
$\dim X-\dim Y$. On the other hand, the dimension of $W_{x}$ is just $\dim X_{b}-\dim Y_{b}$, where $Y_{b}=Y\cap
X_{b}$. But this is equal to $\left(\dim X-\dim B\right)-\left(\dim Y-\dim B\right)$, hence to $\dim
N_{Y}\left(x\right)$. This defines therefore an isomorphism between $W_{x}$ and $N_{Y}\left(x\right)$.

The construction of the diffeomorphism $\phi$ is as follows. For any element in $N_{Y}\left(x\right)$ at some point
$x\in Y$, we take the corresponding vector in $W_{x}$. Then, we use the exponential map associated to the metric
$g_{b}$, where $b=p\left(x\right)$, and obtain a point which lies by construction in the same fiber $X_{b}$ as $x$
does. The smoothness of this map is clear. The fact that it is a diffeomorphism from a neighbourhood of the $0$-section
in $N_{Y}$ to a neighbourhood of $Y$ in $X$, follows from the property that the linearisation at $x$ of the exponential
map is simply the identity on $T_{x}X_{b}$. Finally, the fact that $q$ is $p$-vertical follows directly from the
construction of $\phi$.
\end{proof}

\begin{lemma}\label{lemma trivialisation->principal bundle}Let $\mathcal{F}$ be a Fréchet manifold together with an action of a Fréchet Lie group $\mathcal{G}$.
Suppose that there exists a Fréchet space $\mathcal{E}$, such that for any $f\in\mathcal{F}$, there exist a
$\mathcal{G}$-invariant neighbourhood $\mathcal{W}_f$ of $f$ in $\mathcal{F}$, an open set $\mathcal{V}_f$ in
$\mathcal{M}$ and a Fréchet diffeomorphism
$$\Phi_f:\mathcal{W}_f\to\mathcal{G}\times \mathcal{V}_f,$$ which is equivariant under the action of $\mathcal{G}$, i.e., for all
$g\in\mathcal{G}$, $\varphi\in \mathcal{W}_f$,
$$\Phi_f(g \cdot \varphi)=(g\Phi_f^1(\varphi),\Phi_f^2(\varphi)),$$
where $\Phi_{f}^1$, $\Phi_{f}^2$ denote the respective components of $\Phi_f$.

Then, $\mathcal{F}$ has the structure of a Fréchet principal $\mathcal{G}$-bundle.
\end{lemma}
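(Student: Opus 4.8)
The plan is to show that the local equivariant trivializations $\Phi_f$ assemble into a principal bundle atlas on the quotient $\mathcal{F}/\mathcal{G}$, so that the quotient map $p:\mathcal{F}\to\mathcal{F}/\mathcal{G}$ becomes a smooth principal $\mathcal{G}$-bundle. First I would observe that since each $\mathcal{W}_f$ is $\mathcal{G}$-invariant and $\Phi_f$ is equivariant, the second component $\Phi_f^2:\mathcal{W}_f\to\mathcal{V}_f$ is $\mathcal{G}$-invariant and descends to a bijection $\overline{\Phi_f^2}:p(\mathcal{W}_f)\to\mathcal{V}_f$. Equipping $\mathcal{F}/\mathcal{G}$ with the quotient topology, the sets $p(\mathcal{W}_f)$ form an open cover, and I would use the maps $\overline{\Phi_f^2}$ as charts: the transition maps $\overline{\Phi_{f'}^2}\circ(\overline{\Phi_f^2})^{-1}$ are smooth because they are obtained by composing $\Phi_f^{-1}$ with $\Phi_{f'}$ (both Fréchet-smooth) and then projecting onto the $\mathcal{V}_{f'}$-factor, all of which is independent of the $\mathcal{G}$-coordinate by equivariance. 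This gives $\mathcal{F}/\mathcal{G}$ the structure of a Fréchet manifold modelled on $\mathcal{E}$, and makes $p$ a smooth surjective submersion with local sections.

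Next I would verify the principal bundle axioms. The action of $\mathcal{G}$ on $\mathcal{F}$ is free: in each chart, $g\cdot\varphi=\varphi$ forces $g\Phi_f^1(\varphi)=\Phi_f^1(\varphi)$, hence $g=e$ by cancellation in the group. The orbits are exactly the fibers of $p$ by construction of the quotient. For local triviality, over $p(\mathcal{W}_f)\subset\mathcal{F}/\mathcal{G}$ the map $\Phi_f$ itself furnishes the trivialization $p^{-1}(p(\mathcal{W}_f))=\mathcal{W}_f\xrightarrow{\ \Phi_f\ }\mathcal{G}\times\mathcal{V}_f\cong\mathcal{G}\times p(\mathcal{W}_f)$, and equivariance of $\Phi_f$ says precisely that this identification intertwines the $\mathcal{G}$-action on $\mathcal{W}_f$ with left multiplication on the first factor; the cocycle of transition functions $p(\mathcal{W}_f)\cap p(\mathcal{W}_{f'})\to\mathcal{G}$ is read off from $\Phi_{f'}\circ\Phi_f^{-1}$ and is Fréchet-smooth. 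Finally one checks the compatibility (cocycle) condition on triple overlaps, which is automatic since the transition functions come from honest composition of diffeomorphisms.

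The main obstacle I anticipate is purely point-set/topological: one must confirm that the quotient topology on $\mathcal{F}/\mathcal{G}$ is well-behaved enough (in particular Hausdorff, and that $p$ is open) for the charts $\overline{\Phi_f^2}$ to define a genuine Fréchet manifold structure rather than merely a collection of incompatible local models. Here the $\mathcal{G}$-invariance of the neighbourhoods $\mathcal{W}_f$ is essential: it guarantees $p(\mathcal{W}_f)$ is open and that $p^{-1}(p(\mathcal{W}_f))=\mathcal{W}_f$, so the charts are defined on honestly open sets and no saturation issues arise. Granting this, everything else is a routine unwinding of the equivariance hypothesis, and I would present it at roughly that level of detail rather than belabouring the topological bookkeeping. (I note in passing that the statement contains a typo — "$\mathcal{V}_f$ in $\mathcal{M}$" should read "$\mathcal{V}_f$ in $\mathcal{E}$" — which I would silently correct when invoking the lemma.)
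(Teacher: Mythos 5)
Your proposal takes essentially the same route as the paper: pass to the quotient $\mathcal{Q}=\mathcal{F}/\mathcal{G}$ with the quotient topology, use the descended second components $\overline{\Phi_f^2}$ as an atlas, check that transition maps are smooth because they factor through $\Phi_{f'}\circ\Phi_f^{-1}$ restricted to a fixed group slice, and read off local triviality directly from the equivariance of the $\Phi_f$. The one substantive point you flag but defer, Hausdorffness of $\mathcal{Q}$, is the step the paper does spell out (separating two classes $q,q'$ by cases according to whether both lie in a common chart domain $\mathcal{U}_f$, and in the disjoint case invoking metrizability of the Fréchet model to get a neighbourhood of $q$ with closure inside $\mathcal{U}_f$), so be aware this is not entirely bookkeeping and deserves a line of its own.
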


\begin{proof}Let $\mathcal{Q}$ denote the quotient space of $\mathcal{F}$ under the action of $\mathcal{G}$, endowed with the quotient topology. Let us check that
$\mathcal{Q}$ is a Fréchet manifold.

For any $f\in\mathcal{F}$, the set $\mathcal{V}_f$ is then homeomorphic to some open set $\mathcal{U}_f$ in
$\mathcal{Q}$ and the family $(\mathcal{U}_f)_{f\in\mathcal{F}}$ covers $\mathcal{Q}$. Let us check that $\mathcal{Q}$
is Haussdorf. Let $q$, $q'$ be distinct elements of $\mathcal{Q}$. Let $\mathcal{U}$ denote one element of the family
$(\mathcal{U}_f)_{f\in\mathcal{F}}$ containing $q$. If $q'$ is in $\mathcal{U}$, since the Fréchet manifold
$\mathcal{M}$ is haussdorf, there are two disjoint open subsets in $\mathcal{U}$ (and thus in $\mathcal{Q}$),
containing respectively $q$ and $q'$. If $q'$ is not in $\mathcal{U}$, then we can also find two disjoint open sets
containing $q$ and $q'$ by taking any open neighbourhood of $q$ in $\mathcal{U}$ whose closure is included in
$\mathcal{U}$ and the complement of its closure in $\mathcal{Q}$. This is possible since a Fréchet topology is
metrizable.

Now, let $q=\Phi_f^2(f)$ and $q'=\Phi_{f'}^2(f')$ be two distinct elements in $\mathcal{Q}$, with two neighbourhoods
$\mathcal{U}_f$, $\mathcal{U}_{f'}$ such that $\mathcal{U}_f\cap \mathcal{U}_{f'}\neq\emptyset$, and such that there
exist two homeomorphisms $\phi:\mathcal{U}_f\to \mathcal{V}_f$, $\phi':\mathcal{U}_{f'}\to \mathcal{V}_{f'}$. Then, for
any fixed $g\in\mathcal{G}$, the transition map can be written
$$\phi'\circ\phi^{-1}|_{\phi(\mathcal{U}_f\cap \mathcal{U}_{f'})}=\Phi_{f'}^2\circ(\Phi_f)^{-1}|_{\{g\}\times\phi(\mathcal{U}_f\cap \mathcal{U}_{f'})}$$
and hence is smooth. Therefore, the family of sets $(\mathcal{U}_f)_{f\in\mathcal{F}}$ is a smooth Fréchet atlas for
$\mathcal{Q}$.

Finally, we see that the family of maps $(\Phi_f)_{f\in\mathcal{F}}$ are smooth local equivariant trivializations whose
second coordinate correspond to the natural projection $\mathcal{F}\to\mathcal{Q}$. We thus have a principal bundle
structure.
\end{proof}

\section{The principal bundle structure of $\Diff(M)$}\label{section diff/fib}

In this section, we prove that given a fixed fibration $\pi_{0}:M\rightarrow B$ the action \begin{eqnarray*}
\mathcal{G}_{\pi_{0}}\times\mbox{Diff}\left(M\right) & \longrightarrow & \mbox{Diff}\left(M\right)\\
\left(\psi,\phi\right) & \longmapsto & \phi\circ\psi\end{eqnarray*}
 gives $\mbox{Diff}\left(M\right)$ the structure of a Fréchet principal
bundle with structure group $\mathcal{G}_{\pi_{0}}$, as claimed in the first point of Theorem \ref{theorem diff/fib}.
We leave to the reader to check that the proof works if one replaces $\Diff(M)$ by $\Diff_0(M)$ and
$\mathcal{G}_{\pi_0}$ by $\mathcal{G}_{\pi_0}\cap\Diff(M)$.

The proof is divided in three steps:
\begin{itemize}
\item In Section \ref{sub:The-structure-group} we show that the orbits
of the $\mathcal{G}_{\pi_{0}}$-action are Fréchet submanifolds of $\mbox{Diff}\left(M\right)$ and in particular that
$\mathcal{G}_{\pi_{0}}$ is indeed a Fréchet Lie group.
\item Then, in Section \ref{sub:The-local-sections} we construct a $\mathcal{G}_{\pi_{0}}$-invariant
neighbourhood \emph{$\mathcal{U}$} of $\Id\in\mbox{Diff}\left(M\right)$ together with a Fréchet submanifold
$\mathcal{S}\subset\mathcal{U}$, transverse to the $\mathcal{G}_{\pi_{0}}$-orbits.
\item Finally, Section \ref{sub-Frechet-bundle-chart} provides the construction
of the local charts of the Fréchet principal bundle \[
\mbox{Diff}\left(M\right)\supset\mathcal{U}\longrightarrow\mathcal{S}\times\mathcal{G}_{\pi_{0}}.\] Thanks to the
transitive action of $\mbox{Diff}\left(M\right)$ onto itself by left composition, we then obtain a chart near any
$\phi\in\mbox{Diff}\left(M\right)$.
\end{itemize}
Throughout the proof, we will use intensively the standard identification of smooth maps on $M$ with smooth sections of
$M\times M$, namely
\begin{eqnarray*}
C^{\infty}\left(M,M\right) & \overset{\cong}{\longrightarrow} & \Gamma\left(M,M\times M\right)\\
\phi & \longmapsto & \hat{\phi}=\left(\Id,\phi\right).\end{eqnarray*} Here and always except
when stated explicitly, $M\times M$ is viewed as a trivial bundle over the first factor. Notice also that through this
identification, diffeomorphisms of $M$ correspond to an open subset of $\Gamma\left(M,M\times M\right)$, which we
denote by $\widehat{\mbox{Diff}}\left(M\right)$.

\subsection{\label{sub:The-structure-group}The structure group $\mathcal{G}_{\pi_{0}}$}

We construct a subbundle $N\subset M\times M$ over $M$ which will provide later a suitable principal bundle chart for
$\mbox{Diff}\left(M\right)$ around $\Id$. We define the subset $N$ by\[ N=\left\{ \left(x,y\right)\in M\times
M\mid\pi_{0}\left(x\right)=\pi_{0}\left(y\right)\right\} .\]

\begin{figure}[!h]
\centering
\includegraphics[scale=0.6]{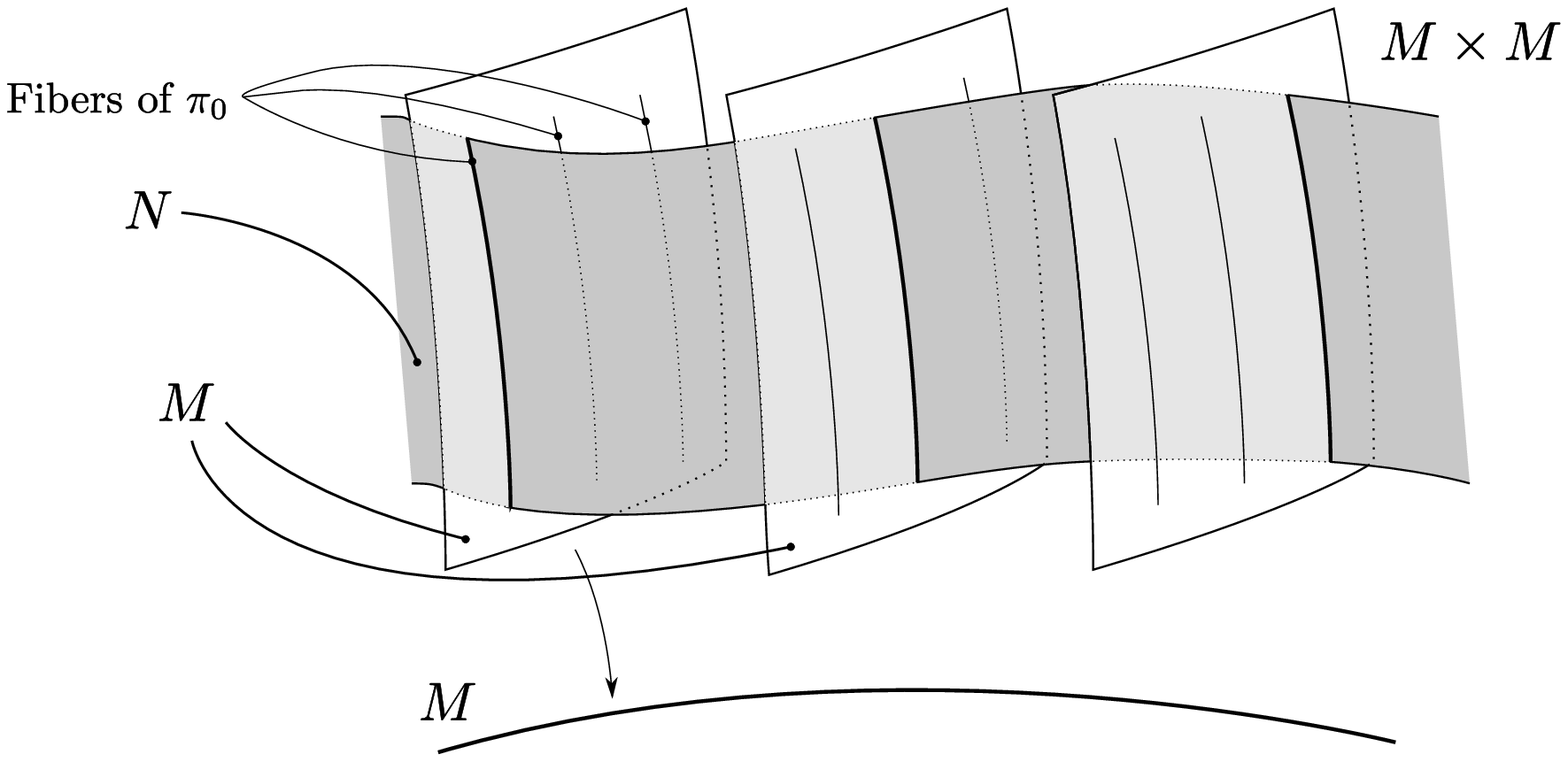}
\label{double_fibration}
\end{figure}

This is nothing but the pullback bundle of
$M\overset{\pi_{0}}{\rightarrow}B$ by the (same) smooth map $\pi_{0}:M\rightarrow B$, hence a subbundle of $M\times M$
over $M$. This subbundle $N$ provides a nice parametrisation of the $\mathcal{G}_{\pi_{0}}$-orbits because of the
following equivalence.
\begin{lemma}
\label{lem-section-N-equivalent-vert}Let $\phi\in\mbox{Diff}\left(M\right)$ be a diffeomorphism. Then the corresponding
$\hat{\phi}\in\Gamma\left(M,M\times M\right)$ lies in $\Gamma\left(M,N\right)$ if and only if
$\phi\in\mathcal{G}_{\pi_{0}}$.\end{lemma}
\begin{proof}
If $\phi\in\mathcal{G}_{\pi_{0}}$, then for each $x$, $\pi_{0}\circ\phi\left(x\right)$ equals $\pi_{0}\left(x\right)$
since $\phi$ is vertical with respect to $\pi_{0}$. But this precisely means that $\left(x,\phi\left(x\right)\right)$
lies in $N$ for all $x\in M$, i.e., $\hat{\phi}\in\Gamma\left(M,N\right)$. Conversely, if
$\left(x,\phi\left(x\right)\right)\in N$ for all $x\in M$, this means by definition that
$\pi_{0}\left(x\right)=\pi_{0}\circ\phi\left(x\right)$ for all $x$, hence $\phi\in\mathcal{G}_{\pi_{0}}$.
\end{proof}
It follows from this lemma that $\mathcal{G}_{\pi_{0}}$ is identified through the correspondance
$\phi\mapsto\hat{\phi}$ with the intersection of the open subset
$\widehat{\mbox{Diff}}\left(M\right)\subset\Gamma(M,M\!\times\! M)$ and $\Gamma\left(M,N\right)$. On the other hand, it
is well-known \cite[Exp 4.2.2]{hamilton} that the set of sections of a subbundle is a Fréchet submanifold of the set of
sections of the bundle. Therefore $\mathcal{G}_{\pi_{0}}$ is a Fréchet sub\-mani\-fold of $\mbox{Diff}\left(M\right)$.
Moreover $\mathcal{G}_{\pi_{0}}$ is also a subgroup of $\mbox{Diff}\left(M\right)$, which is a Fréchet Lie group. This
proves the following.
\begin{lemma}
The group $\mathcal{G}_{\pi_{0}}$ is a Fréchet Lie group.
\end{lemma}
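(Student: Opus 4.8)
The plan is to transport the statement into a space of sections, where the relevant Fr\'echet-manifold structure is already available off the shelf, and then graft on the group operations from $\Diff(M)$. By Lemma~\ref{lem-section-N-equivalent-vert}, the correspondence $\phi\mapsto\hat{\phi}=(\Id,\phi)$ identifies $\mathcal{G}_{\pi_{0}}$ with the set $\widehat{\Diff}(M)\cap\Gamma(M,N)$ inside $\Gamma(M,M\times M)$. First I would invoke the standard fact \cite[Exp 4.2.2]{hamilton} that, since $N$ is a subbundle of $M\times M\to M$, the space $\Gamma(M,N)$ is a splitting Fr\'echet submanifold of $\Gamma(M,M\times M)$: a tubular neighbourhood of $N$ in $M\times M$ produces, in a chart, a direct-sum splitting of the model space in which $\Gamma(M,N)$ is the first factor. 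Since $\widehat{\Diff}(M)$ is an \emph{open} subset of $\Gamma(M,M\times M)$, its intersection with $\Gamma(M,N)$ is open in $\Gamma(M,N)$, hence itself a Fr\'echet manifold and a splitting submanifold of $\widehat{\Diff}(M)$. Pulling this back along $\phi\mapsto\hat{\phi}$ endows $\mathcal{G}_{\pi_{0}}$ with the structure of a Fr\'echet manifold which is a splitting submanifold of $\Diff(M)$.

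Next I would verify that $\mathcal{G}_{\pi_{0}}$ is a subgroup: if $\pi_{0}\circ\phi=\pi_{0}$ and $\pi_{0}\circ\psi=\pi_{0}$, then $\pi_{0}\circ(\phi\circ\psi)=\pi_{0}\circ\psi=\pi_{0}$, and composing $\pi_{0}\circ\phi=\pi_{0}$ with $\phi^{-1}$ on the right gives $\pi_{0}=\pi_{0}\circ\phi^{-1}$; so $\mathcal{G}_{\pi_{0}}$ is stable under composition and inversion. It then remains to check that the group operations are smooth for the Fr\'echet structure just described. Here I would use that $\Diff(M)$ is a Fr\'echet Lie group (see \cite{hamilton} or \cite{michor_book}), so composition $\Diff(M)\times\Diff(M)\to\Diff(M)$ and inversion $\Diff(M)\to\Diff(M)$ are smooth. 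Their restrictions to $\mathcal{G}_{\pi_{0}}\times\mathcal{G}_{\pi_{0}}$, resp. to $\mathcal{G}_{\pi_{0}}$, are smooth as maps into $\Diff(M)$ and, by the subgroup property, take values in $\mathcal{G}_{\pi_{0}}$; because $\mathcal{G}_{\pi_{0}}$ is a splitting submanifold of $\Diff(M)$, these corestrictions are again smooth. Hence multiplication and inversion on $\mathcal{G}_{\pi_{0}}$ are smooth and $\mathcal{G}_{\pi_{0}}$ is a Fr\'echet Lie group.

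The only point requiring care is this last corestriction step: passing from smoothness of a map \emph{into} the ambient group to smoothness of its corestriction \emph{into} $\mathcal{G}_{\pi_{0}}$ is not automatic for an arbitrary submanifold, and is exactly what the splitting property from the first paragraph provides (in a chart adapted to the splitting, the corestriction is a smooth map followed by a continuous linear projection). I expect no genuine obstacle beyond this bookkeeping, since the two analytic inputs — that sections of a subbundle form a splitting submanifold of all sections, and that $\Diff(M)$ is a Fr\'echet Lie group — are both classical.
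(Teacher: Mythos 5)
Your proof is correct and follows essentially the same route as the paper's: identify $\mathcal{G}_{\pi_0}$ via $\phi\mapsto\hat\phi$ with $\widehat{\Diff}(M)\cap\Gamma(M,N)$, invoke Hamilton \cite[Exp 4.2.2]{hamilton} to get a Fr\'echet submanifold of $\Diff(M)$, and conclude from the fact that $\mathcal{G}_{\pi_0}$ is a subgroup of the Fr\'echet Lie group $\Diff(M)$. Your explicit attention to the \emph{splitting} property and the corestriction of the group operations is a worthwhile detail that the paper leaves implicit, but it does not change the underlying argument.
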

Notice that the corresponding Lie algebra is $\Gamma\left(M,V_{\pi_{0}}\right)\subset\mathfrak{X}\left(M\right)$, where
$V_{\pi_{0}}\subset TM$ is the $\pi_{0}$-vertical tangent bundle of $M$, i.e., $V_{\pi_{0}}\left(x\right)=\ker
D\pi_{0}\left(x\right)$.

Notice also that the orbit of any $\phi_{0}\in\mbox{Diff}\left(M\right)$ is also a Fréchet submanifold of
$\mbox{Diff}\left(M\right)$. Indeed, this orbit is simply the image of $\mathcal{G}_{\pi_{0}}$ by the left composition
map $L_{\phi_{0}}:\mbox{Diff}\left(M\right)\rightarrow\mbox{Diff}\left(M\right)$ which sends any $\phi$ to
$\phi_{0}\circ\phi$. This map is smooth \cite[Exp. 4.4.5]{hamilton} and it inverse $L_{\phi_{0}^{-1}}$ as well. It is
therefore a Fréchet diffeomorphism of $\mbox{Diff}\left(M\right)$, and the result follows.

\subsection{\label{sub:The-local-sections}The local section $\mathcal{S}$}

We now need a tubular neighbourhood of $N$ in $M\times M$ with special properties, reflecting the fact that fibers of
$N$ over two different points $x,x'\in M$ satisfying $\pi_{0}\left(x\right)=\pi_{0}\left(x'\right)$ are identified, since both
are naturally identified with $\pi_{0}^{-1}\left(\pi_{0}\left(x\right)\right)$.
\begin{lemma}
\label{lem-tubular-neighbourhood-N}There exists a tubular neighbourhood $U\subset M\times M$ of $N$, which is invariant
under the action of $\mathcal{G}_{\pi_{0}}$ on the second factor and whose projection $P:U\rightarrow N$ has the form
\[ P\left(x,y\right)=\left(x,P_{2}\left(x,y\right)\right)\]
 with $P_{2}:U\rightarrow M$ satisfying $P_{2}\left(\psi\left(x\right),y\right)=P_{2}\left(x,y\right)$
for any $\psi\in\mathcal{G}_{\pi_{0}}$.\end{lemma}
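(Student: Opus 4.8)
The plan is to mimic the construction in Lemma~\ref{lem-voisinage-pratique}, but with the roles adapted to the current setting: here the role of the ``base'' $B$ is played by $M$ itself (viewed, through $\pi_0$, as the object over which fibers should be grouped together), and we need the tubular neighbourhood to respect the $\mathcal{G}_{\pi_0}$-action on the second factor. Concretely, I would consider $M\times M$ as a bundle over $B$ via the map $(x,y)\mapsto\pi_0(y)$ (equivalently, restrict attention to the fibers $M\times\pi_0^{-1}(b)$), and note that $N$ is a subbundle with respect to this projection: the fiber of $N$ over $b\in B$ is $\{(x,y):\pi_0(x)=\pi_0(y)=b\}=\pi_0^{-1}(b)\times\pi_0^{-1}(b)$. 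I would then choose a smooth family of Riemannian metrics $(g_b)_{b\in B}$ on the fibers $\pi_0^{-1}(b)$ of $M$ itself, for instance by restricting a fixed metric $g$ on $M$. This induces a product family of metrics on $M\times\pi_0^{-1}(b)$, and running the fiberwise exponential-map construction of Lemma~\ref{lem-voisinage-pratique} (only in the second variable, since $N$ contains the full first factor direction) produces a tubular neighbourhood $U$ of $N$ whose projection $P$ fixes the first coordinate, i.e.\ $P(x,y)=(x,P_2(x,y))$, exactly as required.

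The key point — and I expect this to be the crux — is the $\mathcal{G}_{\pi_0}$-equivariance: we need $U$ invariant under $(x,y)\mapsto(x,\psi(y))$ and $P_2(x,\psi(y))$ behaving correctly, or rather, reading the statement precisely, we need $P_2(\psi(x),y)=P_2(x,y)$, i.e.\ $P_2$ does \emph{not} depend on $x$ within a $\pi_0$-fiber. This is where the choice of metric must be made carefully: the exponential map used to build $P_2$ lives in the fiber $\pi_0^{-1}(\pi_0(y))$ and is determined by $g_{\pi_0(y)}$ and by the point $y$ together with the normal data at the base point of $N$ over the relevant fiber of $B$. Since the base point in the $B$-direction is $b=\pi_0(y)$ and the first coordinate $x$ only enters as a ``spectator'' factor, the construction of $P_2(x,y)$ genuinely only uses $y$ and the fiber $\pi_0^{-1}(b)$, hence is independent of $x$ — and a fortiori satisfies $P_2(\psi(x),y)=P_2(x,y)$ for $\psi\in\mathcal{G}_{\pi_0}$, which preserves each $\pi_0$-fiber. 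I would make this explicit by identifying, for fixed $b$, the normal bundle of the diagonal-type subbundle and checking that the retraction onto $N$ in the slice $M\times\pi_0^{-1}(b)$ is literally $(\mathrm{id}_M)\times(\text{retraction in }\pi_0^{-1}(b))$.

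There remains the invariance of the domain $U$ under the $\mathcal{G}_{\pi_0}$-action on the second factor. If $U$ is defined as $\{(x,y): d_{g_{\pi_0(y)}}(y,\,\text{something}) <\eps\}$ this is not automatically invariant, so the clean fix is to build $U$ from the normal-bundle neighbourhood: take a $\mathcal{G}_{\pi_0}$-invariant neighbourhood $\mathcal{O}$ of the zero section in the normal bundle $N_N$ of $N$ inside $M\times M$ and set $U=\phi(\mathcal{O})$, where $\phi$ is the fiberwise-exponential diffeomorphism. Since $\mathcal{G}_{\pi_0}$ acts on $N_N$ by bundle automorphisms covering its action on $N$, and acts by isometries on each $(\pi_0^{-1}(b),g_b)$ when the $g_b$ are chosen as restrictions of a single metric—wait, that last point is false in general, so instead one simply shrinks: pick any tubular neighbourhood from the construction above, then replace it by $\bigcap_{\psi\in\mathcal{G}_{\pi_0}}\psi\cdot U$... which need not be open. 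The correct and standard remedy is: use a function $\eps:M\to(0,\infty)$ that is $\pi_0$-\emph{invariant} (i.e.\ $\eps(\psi(x))=\eps(x)$, achievable by averaging or by pulling back a positive function on $B$) to cut out $U$ in the normal-bundle picture via the fiber metrics $g_b$, so that $U$ depends only on the $B$-coordinate $b=\pi_0(y)$ for its ``size'' and hence is automatically $\mathcal{G}_{\pi_0}$-invariant. With $U$ so chosen and $P$ the associated retraction, the lemma follows; the only genuine work is verifying smoothness of the $b$-dependent data and that the fiberwise exponential is a diffeomorphism onto its image after the $\eps$-shrinking, which is routine as in Lemma~\ref{lem-voisinage-pratique}.
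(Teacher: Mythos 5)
There are genuine gaps here, and they both trace back to choices that the paper's proof is set up precisely to avoid.

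\textbf{The fibering is the wrong way around.} You fiber $M\times M$ over $B$ via $(x,y)\mapsto\pi_0(y)$, so the fiber over $b$ is $M\times\pi_0^{-1}(b)$, and you assert that ``$N$ contains the full first factor direction,'' so the normal complement lives in the second variable. It is the other way: inside $M\times\pi_0^{-1}(b)$, the second factor $\pi_0^{-1}(b)$ is entirely contained in $N_b=\pi_0^{-1}(b)\times\pi_0^{-1}(b)$, while the first factor $M$ is not. With a product metric, the orthogonal complement of $T_{(x,y)}N_b$ in $T_x M\oplus T_y\pi_0^{-1}(b)$ is $H_x\oplus\{0\}$, so the fiberwise construction of Lemma~\ref{lem-voisinage-pratique} yields a retraction that moves $x$ and preserves $\pi_0(y)$, i.e.\ of the form $(q_1(x,y),y)$ rather than the required $(x,P_2(x,y))$. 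Switching to the fibering $(x,y)\mapsto\pi_0(x)$ does not fully repair this either: Lemma~\ref{lem-voisinage-pratique} then gives a retraction preserving $\pi_0(x)$, which is strictly weaker than preserving $x$. The paper's device is to apply Lemma~\ref{lem-voisinage-pratique} not in $M\times M$ but in $B\times M$ over $B$: there the fibers are exactly $\{b\}\times M$, so $p$-verticality automatically means fixing the first coordinate pointwise, and pulling back via $\hat\pi_0(x,y)=(\pi_0(x),y)$ transports this to the required $P(x,y)=(x,P_2(x,y))$. (Relatedly, your claim that $P_2(x,y)$ ``is independent of $x$'' cannot be right: since $P(x,y)\in N$, one must have $\pi_0\bigl(P_2(x,y)\bigr)=\pi_0(x)$, so $P_2$ does depend on $x$ through $\pi_0(x)$; independence of $x$ within a $\pi_0$-fiber is the correct statement. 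Also, a metric only on the fibers $\pi_0^{-1}(b)$ is not enough data to exponentiate $y$ off those fibers; one needs a metric on a neighbourhood of the fiber in $M$, which is what the choice $g_b$ on $X_b\cong M$ in the paper supplies.)

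\textbf{The $\mathcal{G}_{\pi_0}$-invariance argument does not close.} Cutting out $U$ in the normal-bundle picture by a $\pi_0$-invariant radius $\eps$ does not make $U$ invariant under $(x,y)\mapsto(x,\psi(y))$: the point $\psi(y')$ lies in the same $\pi_0$-fiber as $y'$, but the corresponding normal-bundle coordinate $\tilde v$ with $\psi(y')=\exp_{\tilde y}(\tilde v)$ is controlled by the metric, and $\mathcal{G}_{\pi_0}$ contains diffeomorphisms that distort distances within fibers arbitrarily, so $\|\tilde v\|$ can be arbitrarily larger than $\|v\|$ even though $\eps$ has not changed. The correct remedy is qualitative, not quantitative: shrink the tubular neighbourhood so that membership in $U$ is decided by a condition on $(\pi_0(x),\pi_0(y))$ alone. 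Concretely, the paper shrinks $\tilde U\subset B\times M$ to $\rho^{-1}(V)$ with $\rho(b,y)=(b,\pi_0(y))$ and $V$ a small neighbourhood of the diagonal in $B\times B$ (possible by compactness), and then sets $U=\hat\pi_0^{-1}(\tilde U)=\{(x,y):(\pi_0(x),\pi_0(y))\in V\}$. This is manifestly invariant under $\mathcal{G}_{\pi_0}$ acting on either factor, which is exactly what both the invariance of $U$ and the well-definedness of $P_2(\psi(x),y)=P_2(x,y)$ require.
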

\begin{proof}
In order to get the required $\mathcal{G}_{\pi_{0}}$-invariance property of our tubular neighbourhood inside $M\times
M$, we will first make a construction in $B\times M$ and then lift it to $M\times M$.

Let us consider the trivial bundle $B\times M$ over $B$. Similarly as above, one defines\[ \tilde{N}=\left\{
\left(b,y\right)\in B\times M\mid b=\pi_{0}\left(y\right)\right\} ,\] which is a subbundle of $B\times M$ over $B$. Its
fiber over $b$ is simply $\left\{ b\right\} \times\pi_{0}^{-1}\left(b\right)$. Now, according to Lemma
\ref{lem-voisinage-pratique}, we can construct a tubular neighbourhood $\tilde{U}\subset B\times M$ of $\tilde{N}$ such
that the fibers of its associated projection $\tilde{P}:\tilde{U}\rightarrow\tilde{N}$ are included in the fibers of
$B\times M$, i.e., $\tilde{P}$ has the form $\tilde{P}\left(b,y\right)=\left(b,\tilde{P}_{2}\left(b,y\right)\right)$.

On the other hand, one can assume that $\tilde{U}$ is invariant under the action of $\mathcal{G}_{\pi_{0}}$ on the
second factor of $B\times M$. Indeed, for any neighbourhood $V\subset B\times B$ of the diagonal, the set
$\rho^{-1}\left(V\right)$, where $\rho:B\times M\rightarrow M$ is defined by
$\rho\left(b,x\right)=\left(b,\pi_{0}\left(x\right)\right)$, is a neighbourhood of $\tilde{N}$ in $B\times M$. Then, we
can take $V$ so small that $\rho^{-1}\left(V\right)$ is contained in $\tilde{U}$. To see this, fix a metric on $B\times
M$ and consider the distance $\delta$ between $\tilde{N}$ and the boundary of $\tilde{U}$. It is non-vanishing by
compactness of $M$ and $B$. Then one can take $V$ with a diameter smaller than $\delta$, implying
$\rho^{-1}\left(V\right)\subset\tilde{U}$. In other word, up to taking $\tilde{U}$ smaller, we can assume it has the
form $\tilde{U}=\rho^{-1}\left(V\right)$, which is by construction invariant under the action of
$\mathcal{G}_{\pi_{0}}$ on the second factor of $B\times M$.

Now, if we define $\hat{\pi}_{0}:M\times M\rightarrow B\times M$ by
$\hat{\pi}_{0}\left(x,y\right)=\left(\pi_{0}\left(x\right),y\right)$, it follows that
$\hat{\pi}_{0}^{-1}\left(\tilde{N}\right)$ is precisely $N$ and that $U=\hat{\pi}_{0}^{-1}\left(\tilde{U}\right)$ is a
neighbourhood of $N$. Then we check easily that the map \[
P:\left(x,y\right)\longmapsto\left(x,\tilde{P}_{2}\circ\hat{\pi}_{0}\left(x,y\right)\right)\]
 is indeed a projection from $U$ onto $N$. %
Moreover, by construction for any $\psi\in\mathcal{G}_{\pi_{0}}$ and any point $x\in M$,
one has $\pi_{0}\left(x\right)=\pi_{0}\left(\psi\left(x\right)\right)$. This implies that \[
\tilde{P}_{2}\circ\hat{\pi}_{0}\left(\psi\left(x\right),y\right)=\tilde{P}_{2}\circ\hat{\pi}_{0}\left(x,y\right)\]
 and therefore $P_{2}\left(\psi\left(x\right),y\right)=P_{2}\left(x,y\right)$.
On the other hand, it is straightforward to check that $U$ is invariant under the action of $\mathcal{G}_{\pi_{0}}$  on
the second factor of $B\times M$, because $\tilde{U}$ is so.
\end{proof}
\begin{figure}[!h]
\centering
\includegraphics[scale=0.6]{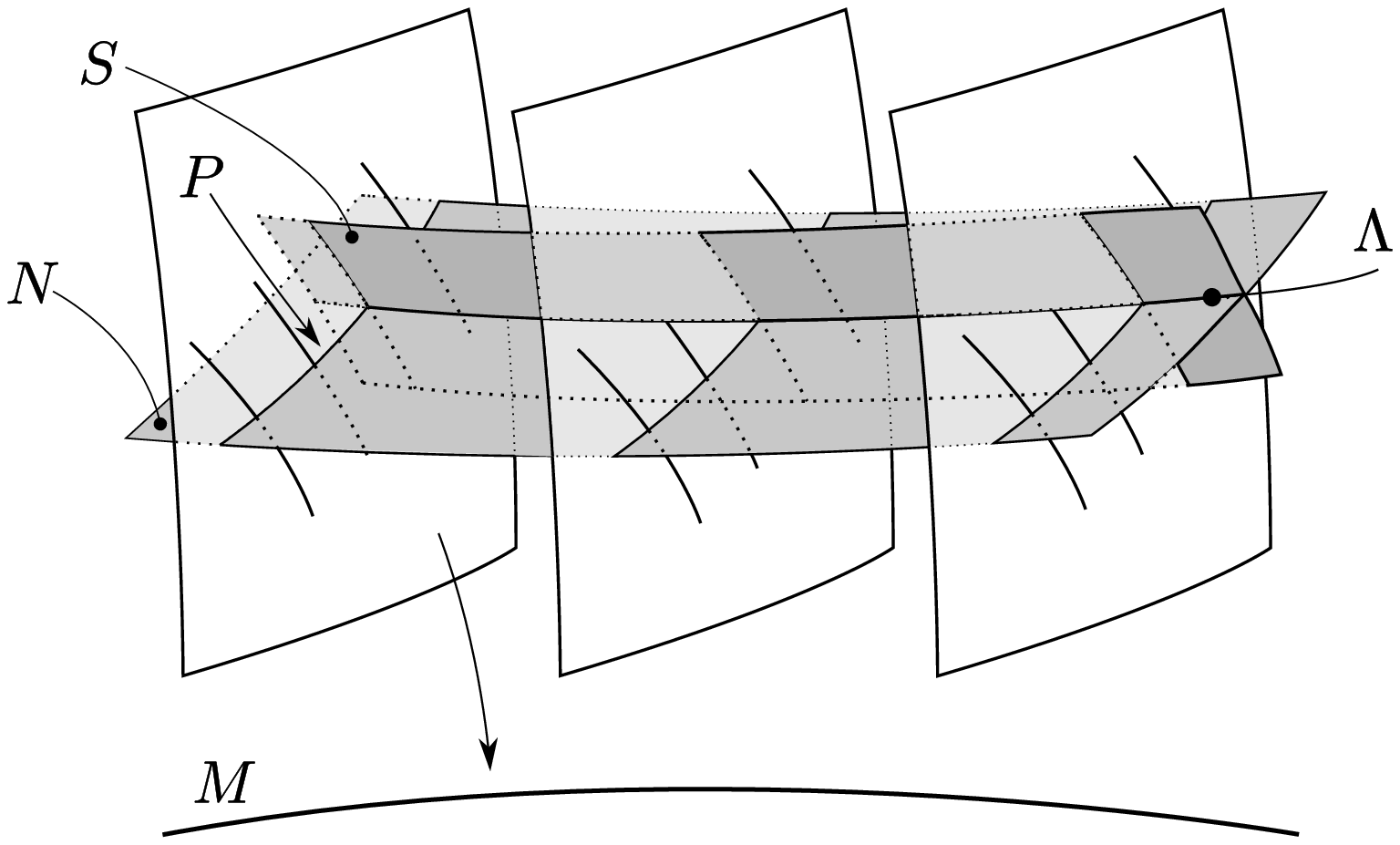}
\label{double_fibration}
\end{figure}

\medskip
The next step is to construct a submanifold $\mathcal{S}\subset\mbox{Diff}\left(M\right)$ which is transverse to
the $\mathcal{G}_{\pi_{0}}$-orbits. We consider the subbundle $N\subset M\times M$ defined at the beginning of the
section, and the associated tubular neighbourhood $U\overset{P}{\rightarrow}N$ of Lemma
\ref{lem-tubular-neighbourhood-N}. Then we denote by $\Delta\subset M\times M$ the diagonal and by $\iota_{\Delta}$ the
associated inclusion. Clearly, $\Delta$ lies actually in $N$. Then we define $S=P^{-1}\left(\Delta\right)$. It is a
submanifold of $M\times M$, but in fact will see that it is even a subbundle of $M\times M$ transverse to $N$.

To prove that it is a subbundle, first notice that $S$ is nothing but the total space of the induced bundle
$\iota_{\Delta}^*P$ over $\Delta$. Then, since the restriction to $\Delta$ of the first projection $P_M:M\times M$ is a
diffeomorphism $\Delta\to M$, one gets by composition a bundle $S\to M$. Finally, since $P_{M}\circ P=P_{M}$, the
projection of this last bundle is nothing but the restriction of $P_M$ to $S$, so that it is indeed a subbundle of
$M\times M$. Since $TS$ contains the vertical direction of the bundle $P$ over $N$, $S$ is transverse to $N$.


\medskip
Consequently, the set of sections of $S$ or more precisely
\[
\mathcal{S}=\left\{ \phi\in\mbox{Diff}\left(M\right)\mid\hat{\phi}\in\Gamma\left(M,S\right)\right\} ,\]
 is a Fréchet submanifold of $\mbox{Diff}\left(M\right)$. The following
characterisation will be useful.
\begin{lemma}
\label{lem-S-equivalent-property}A diffeomorphism $\phi$ lies in $\mathcal{S}$ if and only if
$P\circ\hat{\phi}=\hat{\Id}$.\end{lemma}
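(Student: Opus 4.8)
The plan is to simply unwind the definitions; the entire content is the triangular form $P(x,y)=(x,P_2(x,y))$ of the projection provided by Lemma~\ref{lem-tubular-neighbourhood-N}, together with the observation that a point of $M\times M$ lies on $\Delta$ precisely when its two coordinates agree.

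First I would note that, by definition, $\phi\in\mathcal{S}$ means $\hat{\phi}=(\Id,\phi)\in\Gamma(M,S)$ with $S=P^{-1}(\Delta)$. Since the bundle projection $S\to M$ is the restriction of the first projection $P_{M}:M\times M\to M$, and $P_{M}\circ\hat{\phi}=\Id$ holds automatically, the map $\hat{\phi}$ is a section of $S\to M$ as soon as it takes its values in $S$. Hence $\phi\in\mathcal{S}$ if and only if $\hat{\phi}(x)\in S$ for every $x\in M$; moreover in that case $\hat{\phi}(M)\subset S\subset U$, so that $P\circ\hat{\phi}$ is everywhere defined.

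Next I would compute: $\hat{\phi}(x)=(x,\phi(x))$ lies in $S=P^{-1}(\Delta)$ if and only if $P(x,\phi(x))\in\Delta$, and by the triangular form $P(x,\phi(x))=(x,P_{2}(x,\phi(x)))$. A pair $(x,z)$ belongs to the diagonal exactly when $z=x$, so $\hat{\phi}(x)\in S$ if and only if $P_{2}(x,\phi(x))=x$, equivalently $P(\hat{\phi}(x))=(x,x)=\hat{\Id}(x)$. Quantifying over $x\in M$ gives $\phi\in\mathcal{S}\iff P\circ\hat{\phi}=\hat{\Id}$, and the same chain of equivalences read backwards handles the converse (note that the equality $P\circ\hat{\phi}=\hat{\Id}$ already presupposes $\hat{\phi}(M)\subset U$). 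I expect no genuine obstacle here: this is a bookkeeping lemma recording a convenient reformulation of membership in $\mathcal{S}$, to be used later in building the principal bundle chart, and the only point requiring a moment of care is the implicit domain condition $\hat{\phi}(M)\subset U$, which is automatic whenever $\phi\in\mathcal{S}$.
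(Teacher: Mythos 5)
Your proof is correct and follows essentially the same route as the paper's: both hinge on the triangular form $P(x,y)=(x,P_2(x,y))$, which guarantees that $P\circ\hat{\phi}$ is again a section and reduces the condition $\hat{\phi}(M)\subset S=P^{-1}(\Delta)$ to the pointwise identity $P_2(x,\phi(x))=x$. The paper phrases this more globally (two sections agree iff their images coincide) while you unwind it pointwise, but the content and the attention to the implicit domain requirement $\hat{\phi}(M)\subset U$ are the same.
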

\begin{proof}
First, the left composition of a section $\hat{\phi}\in\Gamma\left(M,M\times M\right)$ by $P$ is still a section.
Therefore, $P\circ\hat{\phi}$ equals $\hat{\Id}$ if and only if its image $P\circ\hat{\phi}\left(M\right)$ coincides
with the image of $\hat{\Id}$, namely $\Delta$. But this happens precisely when the image of $\hat{\phi}$ lies in $S$,
by definition of $S$.
\end{proof}

\subsection{\label{sub-Frechet-bundle-chart}The principal bundle charts }

We now have all the technical tools for defining the principal bundle charts on $\mbox{Diff}\left(M\right)$. We first
define a chart near $\Id$. We consider the open set $U$ from Lemma \ref{lem-tubular-neighbourhood-N} and define the
set\[ \mathcal{U}=\left\{ \phi\in\mbox{Diff}\left(M\right)\mid\mbox{im}\left(\hat{\phi}\right)\subset U\right\} \]
 which is an open neighbourhood of $\Id$ in $\mbox{Diff}\left(M\right)$.
It is also $\mathcal{G}_{\pi_{0}}$-invariant because of the corresponding property for $U$.

On the other hand, in the definition of the principal bundle chart below, we will need that the composition
$P\circ\hat{\phi}$ lies in $\widehat{\mbox{Diff}}\left(M\right)$, or equivalently that the second factor
$P_{2}\circ\hat{\phi}$ is a diffeomorphism. We thus have to restrict to the smaller set \[ \tilde{\mathcal{U}}=\left\{
\phi\in\mathcal{U}\mid P_{2}\circ\hat{\phi}\in\Diff(M)\right\} ,\] which is open since the left composition by $P_2$ is
a Fréchet smooth map. The set $\tilde{\mathcal{U}}$ turns out to be also $\mathcal{G}_{\pi_{0}}$-invariant. Indeed, for
any $\phi\in\tilde{\mathcal{U}}$ and any $\psi\in\mathcal{G}_{\pi_{0}}$, we compute
$P_{2}\circ\widehat{\phi\circ\psi}$. This gives \[
P_{2}\circ\left(\Id,\phi\circ\psi\right)=P_{2}\circ\left(\psi,\phi\circ\psi\right)\] because of the second property of
$P$ given in Lemma \ref{lem-tubular-neighbourhood-N}. But this is equal to $P_{2}\circ\hat{\phi}\circ\psi$ which is a
diffeomorphism of $M$ since both $P_{2}\circ\hat{\phi}$ and $\psi$ are so.
\begin{lemma}
\label{lemma-chart}The map\begin{eqnarray*}
\Phi:\tilde{\mathcal{U}} & \longrightarrow & \mathcal{S}\times\mathcal{G}_{\pi_{0}}\\
\phi & \longmapsto & \left(\phi_{\mathcal{S}},\psi\right),\end{eqnarray*}
 where $\hat{\psi}=P\circ\hat{\phi}$ and $\phi_{\mathcal{S}}=\phi\circ\psi^{-1}$, is well-defined and is a Fréchet smooth diffeomorphism,
 whose inverse is \[ \Phi^{-1}:\left(\phi_{\mathcal{S}},\psi\right)\longmapsto\phi_{\mathcal{S}}\circ\psi.\]

\end{lemma}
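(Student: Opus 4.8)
The plan is to verify directly that $\Phi$ and the proposed inverse $\Phi^{-1}:(\phi_{\mathcal{S}},\psi)\mapsto\phi_{\mathcal{S}}\circ\psi$ are mutually inverse and that both are Fr\'echet smooth; smoothness will then follow from general facts about composition of sections already quoted in the excerpt (\cite[Exp. 4.4.5]{hamilton}), so the heart of the matter is the algebraic verification that the various maps land in the claimed spaces.

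First I would check that $\Phi$ is well-defined. Given $\phi\in\tilde{\mathcal{U}}$, set $\hat\psi=P\circ\hat\phi$. Since $P$ maps $U$ to $N$ and $\mathrm{im}(\hat\phi)\subset U$, the section $P\circ\hat\phi$ takes values in $N$; by definition of $\tilde{\mathcal{U}}$ its second component $P_2\circ\hat\phi$ is a diffeomorphism of $M$, so $P\circ\hat\phi=\widehat{\psi}$ for a unique $\psi\in\mathcal{G}_{\pi_0}$ (using Lemma~\ref{lem-section-N-equivalent-vert}: a diffeomorphism whose graph lies in $\Gamma(M,N)$ is $\pi_0$-vertical). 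Then $\phi_{\mathcal{S}}:=\phi\circ\psi^{-1}$ is a diffeomorphism of $M$, and I must check $\phi_{\mathcal{S}}\in\mathcal{S}$, i.e.\ by Lemma~\ref{lem-S-equivalent-property} that $P\circ\widehat{\phi_{\mathcal{S}}}=\hat{\Id}$. Here is where the second invariance property of $P$ from Lemma~\ref{lem-tubular-neighbourhood-N} is used: writing $\widehat{\phi_{\mathcal{S}}}=(\Id,\phi\circ\psi^{-1})$ and precomposing the first slot with $\psi^{-1}$ (legitimate because $P_2(\psi^{-1}(x),y)=P_2(x,y)$), one gets $P\circ\widehat{\phi\circ\psi^{-1}}$ evaluated so that it equals $\big(\Id,P_2(\psi^{-1},\phi\circ\psi^{-1})\big)=\big(\Id,(P_2\circ\hat\phi)\circ\psi^{-1}\big)=\big(\Id,\psi\circ\psi^{-1}\big)=\hat{\Id}$. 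One should also note $\mathrm{im}(\widehat{\phi_{\mathcal{S}}})\subset U$, which needs a small argument or a further shrinking of $\tilde{\mathcal{U}}$; I would address this by remarking that $\mathcal{S}\cap\tilde{\mathcal{U}}$ is the relevant target, or by observing $S\subset U$ directly since $S=P^{-1}(\Delta)\subset U$. Thus $\Phi$ is well-defined with values in $\mathcal{S}\times\mathcal{G}_{\pi_0}$.

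Next I would verify the two compositions. For $\Phi^{-1}\circ\Phi$: starting from $\phi$ we produce $(\phi\circ\psi^{-1},\psi)$ and then recompose to $\phi\circ\psi^{-1}\circ\psi=\phi$, trivially. For $\Phi\circ\Phi^{-1}$: starting from $(\phi_{\mathcal{S}},\psi)\in\mathcal{S}\times\mathcal{G}_{\pi_0}$, set $\phi=\phi_{\mathcal{S}}\circ\psi$; I must compute $P\circ\hat\phi$ and recover $\psi$ in the first slot and $\phi_{\mathcal{S}}$ after dividing out. Using again $P_2(\psi(x),y)=P_2(x,y)$ on $\widehat{\phi_{\mathcal{S}}\circ\psi}=(\Id,\phi_{\mathcal{S}}\circ\psi)=(\psi,\phi_{\mathcal{S}}\circ\psi)$ composed appropriately, one finds $P\circ\hat\phi=\big(\Id, (P_2\circ\widehat{\phi_{\mathcal{S}}})\circ\psi\big)=(\Id,\Id\circ\psi)=\hat\psi$, where $P_2\circ\widehat{\phi_{\mathcal{S}}}=\Id$ is exactly the statement $\phi_{\mathcal{S}}\in\mathcal{S}$ (Lemma~\ref{lem-S-equivalent-property}). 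Hence the first component recovered by $\Phi$ is $\psi$, and the second is $\phi\circ\psi^{-1}=\phi_{\mathcal{S}}$, as desired. This shows $\Phi$ is a bijection with the stated inverse.

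Finally, smoothness. Both $\Phi$ and $\Phi^{-1}$ are built from: left composition by the fixed smooth fiber map $P$ (smooth on section spaces), the inversion map $\psi\mapsto\psi^{-1}$ on $\Diff(M)$, and composition $\Diff(M)\times\Diff(M)\to\Diff(M)$ — all known to be Fr\'echet smooth on $\Diff(M)$ of a closed manifold (\cite[Exp. 4.4.5]{hamilton}, \cite{hamilton,michor_book}). Since $\psi$ depends smoothly on $\phi$ (it is the second component of $P\circ\hat\phi$, which is smooth in $\phi$), so do $\psi^{-1}$ and $\phi\circ\psi^{-1}$; thus $\Phi$ is smooth, and symmetrically $\Phi^{-1}$ is smooth. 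The only point demanding care is the openness and $\mathcal{G}_{\pi_0}$-invariance of $\tilde{\mathcal{U}}$, both already established in the text preceding the lemma, so there is no real obstacle; the mildly delicate step is keeping track of the $\mathcal{G}_{\pi_0}$-invariance identity $P_2(\psi(x),y)=P_2(x,y)$ and applying it in the correct slot during the computation of $\Phi\circ\Phi^{-1}$, which is the one verification I would write out in full.
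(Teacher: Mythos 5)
Your proposal follows essentially the same route as the paper's proof: well-definedness via Lemma~\ref{lem-section-N-equivalent-vert} and the $P_2$-invariance from Lemma~\ref{lem-tubular-neighbourhood-N}, the same two algebraic computations for $\Phi^{-1}\circ\Phi$ and $\Phi\circ\Phi^{-1}$ (including the slot-replacement trick using $P_2(\psi(x),y)=P_2(x,y)$), the observation that $S\subset U$ forces $\mathcal{S}\subset\tilde{\mathcal{U}}$, and smoothness from the standard results on composition and inversion in $\Diff(M)$. The argument is correct and complete.
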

\begin{proof}
First we check that this definition makes sense. Since the image of $P$ lies in $N$, it follows that $\hat{\psi}$ is a
section of $N$ and, thanks to Lemma \ref{lem-section-N-equivalent-vert}, that $\psi\in\mathcal{G}_{\pi_{0}}$. It
remains to check that $\phi_{\mathcal{S}}$ lies indeed in $\mathcal{S}$. According to Lemma
\ref{lem-S-equivalent-property}, we only need to check that $P\circ\hat{\phi}_{\mathcal{S}}=\hat{\Id}$, or equivalently
that $P_{2}\circ\hat{\phi}_{\mathcal{S}}=\Id$. The composition $P_{2}\circ\hat{\phi}_{\mathcal{S}}$ equals
$P_{2}\circ\left(\Id,\phi\circ\psi^{-1}\right)$. Now, if we use the property of $P_{2}$ given in Lemma
\ref{lem-tubular-neighbourhood-N}, we obtain $P_{2}\circ\left(\psi^{-1},\phi\circ\psi^{-1}\right)$ and thus
$P_{2}\circ\hat{\phi}\circ\psi^{-1}$. But this is exactly $\psi\circ\psi^{-1}$ and thus $\Id$. The map $\Phi$ is
therefore well defined. It is also smooth since it is made of compositions and inversion of diffeomorphisms, which are
both Fréchet smooth \cite[Exp. 4.4.5 and 4.4.6]{hamilton}.

Now, the image of the claimed inverse $\Phi^{-1}$ is included in $\tilde{U}$. Indeed, since $S$ is included in $U$,
then $\mathcal{S}$ is included in $\mathcal{U}$. It is moreover in $\tilde{\mathcal{U}}$ thanks to Lemma
\ref{lem-S-equivalent-property}. Finally, $\tilde{\mathcal{U}}$ is $\mathcal{G}_{\pi_{0}}$-invariant, hence the image
of $\Phi^{-1}$ is included in $\tilde{U}$.

Then, it is straightforward to see that the claimed inverse $\Phi^{-1}$ is a left inverse of $\Phi$. Let us check that
it is also a right inverse. For any $\phi_{\mathcal{S}}\in \mathcal{S}$ and any $\psi\in\mathcal{G}_{\pi_{0}}$ we have
to compute $(\phi_{\mathcal{S}}',\psi')=\Phi\left(\phi_{\mathcal{S}}\circ\psi\right)$. First, we have
$\psi'=P_{2}\circ\widehat{\phi_{\mathcal{S}}\circ\psi}$, i.e.,\[
\psi'=P_{2}\circ\left(\Id,\phi_{\mathcal{S}}\circ\psi\right).\] But the property of $P_{2}$ given in Lemma
\ref{lem-tubular-neighbourhood-N} shows that this equals $P_{2}\circ\hat{\phi}_{\mathcal{S}}\circ\psi$. On the other
hand, since $\hat{\phi}_{\mathcal{S}}$ lies in $\mathcal{S}$, one has $P_2\circ\hat{\phi}_{\mathcal{S}}=\Id $, hence
$\psi'=\psi$. Therefore, $\Phi\left(\phi_{\mathcal{S}}\circ\psi\right)$ is equal to
$\left(\phi'_{\mathcal{S}},\psi\right)$ with $\phi'_{\mathcal{S}}$ given by $\phi_{\mathcal{S}}\circ\psi\circ\psi^{-1}$
which thus coincides with $\phi_{\mathcal{S}}$. We have thus proved that the map
$\left(\phi_{\mathcal{S}},\psi\right)\mapsto\phi_{\mathcal{S}}\circ\psi$ is indeed the (double-sided) inverse of
$\Phi$.
\end{proof}

Therefore, near $\Id\in\mbox{Diff}\left(M\right)$, we have constructed a diffeomorphism $\Phi$ from a
$\mathcal{G}_{\pi_{0}}$-invariant neighbourhood $\tilde{\mathcal{U}}$ of $\Id$ to
$\mathcal{S}\times\mathcal{G}_{\pi_{0}}$. It is actually $\mathcal{G}_{\pi_{0}}$-equivariant, as one can easily check
on the inverse $\Phi^{-1}$.

Then, near any $\phi\in\mbox{Diff}\left(M\right)$, we can construct a similar diffeomorphism \[
\Phi_{\phi_{0}}:\tilde{\mathcal{U}}_{\phi_{0}}\longrightarrow\mathcal{S}_{\phi_{0}}\times\mathcal{G}_{\pi_{0}},\] where
$\tilde{\mathcal{U}}_{\phi_{0}}$ and $\mathcal{S}_{\phi_{0}}$ are obtained respectively from $\tilde{\mathcal{U}}$ and
$\mathcal{S}$ by left composition with $\phi_{0}$. The map $\Phi_{\phi_{0}}$ is simply defined by \[
\Phi_{\phi_{0}}\left(\phi\right)=\left(\phi_{0}\circ\Phi^{\mathcal{S}}\left(\phi_{0}^{-1}\circ\phi\right),\Phi^{\mathcal{G}_{\pi_{0}}}\left(\phi_{0}^{-1}\circ\phi\right)\right),\]
where $\Phi^{\mathcal{S}}$ and $\Phi^{\mathcal{G}_{\pi_{0}}}$ are respectively the $\mathcal{S}$ and
$\mathcal{G}_{\pi_{0}}$ component of $\Phi$.

Finally, it follows from Lemma \ref{lemma trivialisation->principal bundle} that the maps $\Phi_{\phi_{0}}$ form a
principal bundle atlas for $\mbox{Diff}\left(M\right)$ with structure group $\mathcal{G}_{\pi_{0}}$.

\section{The quotient space $\Diff_0(M)/(\mathcal{G}_{\pi_0}\cap\Diff_0(M))$}\label{section fibrations as fibers}

In this section, we prove the second part of Theorem \ref{theorem diff/fib}.

\begin{proof}First recall that, as already mentioned in the introduction, $\Fib(M,B)$ is an open subset of $C^{\infty}(M,B)$. Let $\pi_0\in\Fib(M,B)$. The smooth map
$$\Psi:\Diff_0(M)\to\Fib(M,B),\quad\phi\mapsto\pi_0\circ\phi^{-1}$$
induces a map
$$\tilde{\Psi}:\Diff_0(M)/(\mathcal{G}_{\pi_0}\cap\Diff_0(M))\to\Fib(M,B).$$
In order to prove that $\tilde{\Psi}$ is a diffeomorphism onto the component of $\pi_0$, we will prove that it is
injective, smooth, that it admits a local smooth inverse near any point and finally that its image is the connected
component of $\pi_0$.

\medskip Proving the injectivity is easy: for any two diffeomorphisms $\phi_1,\phi_2\in\Diff_0(M)$ satisfying
$\pi_0\circ\phi_1^{-1}=\pi_0\circ\phi_2^{-1}$, then $\phi_1^{-1}\circ\phi_2$ obviously belongs to $\mathcal{G}_{\pi_0}$
and thus $\phi_1$ and $\phi_2$ represent the same element in $\Diff_0(M)/(\mathcal{G}_{\pi_0}\cap\Diff_0(M))$.

\medskip The fact that $\tilde{\Psi}$ is smooth follows from the first part of Theorem \ref{theorem diff/fib}. Indeed, for any $\varphi$ in the
quotient $\Diff_0(M)/\mathcal{G}_{\pi_0}$, the first part of Theorem \ref{theorem diff/fib} implies that there exists a
smooth section $\sigma$ of the bundle $\Diff_0(M)\to\Diff_0(M)/\mathcal{G}_{\pi_0}$ defined on a neighbourhood of
$\varphi$. On this neighbourhood, $\tilde{\Psi}$ can be written $\tilde{\Psi}=\Psi\circ\sigma$ and therefore is smooth.

\medskip To construct a local inverse to $\tilde{\Psi}$, we will make use of the following lemma which relates the spaces of sections of two different fibrations of the same manifold.
\begin{lemma}[Roy \cite{roy_11}]\label{lemme de roy} Let $X$ be a smooth manifold and $p_1,p_2:X\to M$ be two fibrations
over $M$. Suppose there exists a common section $s_0:M\to X$, i.e., a smooth map satisfying
$$p_1\circ s_0=p_2\circ s_0=\Id_M.$$
Then, there exists open subsets $V_1\subset\Gamma(p_1)$ and $V_2\subset\Gamma(p_2)$ containing $s_0$ and such that the
map
$$V_1\to V_2,\quad \alpha\mapsto\alpha\circ(p_2\circ\alpha)^{-1}$$
is a diffeomorphism.

\end{lemma}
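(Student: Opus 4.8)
The plan is to write down the claimed inverse explicitly, check algebraically that the two maps are mutually inverse, and then reduce the smoothness of both to the standard facts (already used repeatedly in the excerpt) that composition and inversion of smooth maps between compact manifolds are Fr\'echet smooth. First I would fix the relevant open sets. Since $M$ is compact, $\Diff(M)$ is open in $\ci(M,M)$, and the map $\ci(M,X)\to\ci(M,M)$, $f\mapsto p_2\circ f$, is Fr\'echet smooth and sends $s_0$ to $p_2\circ s_0=\Id_M\in\Diff(M)$. Hence
$$V_1:=\{\alpha\in\Gamma(p_1)\mid p_2\circ\alpha\in\Diff(M)\}$$
is an open neighbourhood of $s_0$ in $\Gamma(p_1)$; here I use that $\Gamma(p_1)$ is itself a Fr\'echet submanifold of $\ci(M,X)$, being (via $\alpha\mapsto(\Id,\alpha)$) the space of sections of the subbundle $\{(x,y)\in M\times X\mid p_1(y)=x\}$ of $M\times X$, cf. \cite[Exp 4.2.2]{hamilton}. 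Symmetrically, set $V_2:=\{\beta\in\Gamma(p_2)\mid p_1\circ\beta\in\Diff(M)\}$, an open neighbourhood of $s_0$ in $\Gamma(p_2)$.

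Next I would introduce the two maps $F(\alpha)=\alpha\circ(p_2\circ\alpha)^{-1}$ and $G(\beta)=\beta\circ(p_1\circ\beta)^{-1}$ and show they go between $V_1$ and $V_2$. For $\alpha\in V_1$ we have $p_2\circ F(\alpha)=(p_2\circ\alpha)\circ(p_2\circ\alpha)^{-1}=\Id_M$, so $F(\alpha)\in\Gamma(p_2)$; and since $p_1\circ\alpha=\Id_M$, also $p_1\circ F(\alpha)=(p_2\circ\alpha)^{-1}\in\Diff(M)$, so in fact $F(\alpha)\in V_2$. Symmetrically $G$ maps $V_2$ into $V_1$. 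Then $G\circ F=\Id_{V_1}$ and $F\circ G=\Id_{V_2}$ by a one-line computation: for $\alpha\in V_1$ put $\beta=F(\alpha)$; from $p_1\circ\beta=(p_2\circ\alpha)^{-1}$ we get $G(\beta)=\beta\circ(p_2\circ\alpha)=\alpha\circ(p_2\circ\alpha)^{-1}\circ(p_2\circ\alpha)=\alpha$, and the other composite is handled the same way.

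It remains to check that $F$ (and symmetrically $G$) is Fr\'echet smooth. I would decompose $F$ as the composite of three smooth maps: $\alpha\mapsto(\alpha,\,p_2\circ\alpha)\in\ci(M,X)\times\Diff(M)$, smooth because $p_2$ is fixed and composition is smooth \cite[Exp. 4.4.5]{hamilton}; then $(\alpha,g)\mapsto(\alpha,g^{-1})$, smooth because inversion in $\Diff(M)$ is smooth \cite[Exp. 4.4.6]{hamilton}; then $(\alpha,h)\mapsto\alpha\circ h\in\ci(M,X)$, smooth again by \cite[Exp. 4.4.5]{hamilton}. Restricting the first map to the submanifold $\Gamma(p_1)$, and using that the output lies in the submanifold $\Gamma(p_2)$ (as shown above) so that the corestriction is again smooth, one obtains that $F\colon V_1\to V_2$ is smooth; the identical argument gives smoothness of $G$. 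Hence $F\colon V_1\to V_2$ is a Fr\'echet diffeomorphism with inverse $G$, which is precisely the statement of the lemma, the map being $\alpha\mapsto\alpha\circ(p_2\circ\alpha)^{-1}$.

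The only genuinely delicate point is the smoothness bookkeeping: one must be sure that composition and inversion are \emph{jointly} smooth as operations on spaces of maps between compact manifolds (across the two different manifolds $M$ and $X$), and that these operations remain smooth after restricting and corestricting to the section spaces $\Gamma(p_i)$ regarded as submanifolds of $\ci(M,X)$. Once those standard facts are quoted, the rest of the argument is purely formal.
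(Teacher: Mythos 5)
Your proof is correct and follows essentially the same route as the paper: define $V_1$ as the preimage of $\Diff(M)$ under $\alpha\mapsto p_2\circ\alpha$, use openness of $\Diff(M)$ in $\ci(M,M)$ and continuity of that map, verify well-definedness, and invoke smoothness of composition and inversion. The one place you are slightly sharper than the paper is the inverse: the paper defines $V_2$ symmetrically and asserts that the two maps $F(\alpha)=\alpha\circ(p_2\circ\alpha)^{-1}$ and $G(\beta)=\beta\circ(p_1\circ\beta)^{-1}$ are mutually inverse \emph{``up to taking smaller $V_1$ and $V_2$''}, whereas your computation $p_1\circ F(\alpha)=(p_2\circ\alpha)^{-1}$ shows that $F$ already lands exactly in $V_2$ and that $G\circ F=\Id_{V_1}$ and $F\circ G=\Id_{V_2}$ hold on the nose, so no shrinking is in fact required. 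Everything else (the factorization of $F$ through composition and inversion, and the use of $\Gamma(p_i)$ as a Fr\'echet submanifold of $\ci(M,X)$) matches what the paper does or tacitly assumes.
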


\begin{figure}[!h]
\centering
\includegraphics{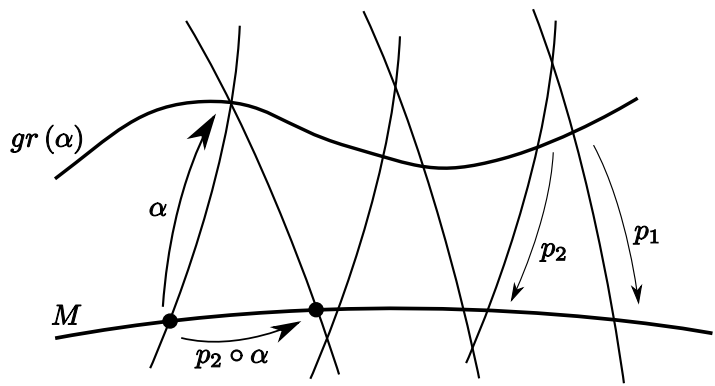}
\caption{On this picture, the graph of $s_0$ is identified with $M$} \label{double_fibration}
\end{figure}

For the sake of completeness, we briefly recall its proof.
\begin{proof}First of all, the map $A:\Gamma(p_1)\to C^{\infty}(M,M)$, $\alpha\mapsto p_2\circ\alpha$ is smooth and in particular continuous so that the
pre-image $V_1$ of the open subset $\Diff(M)\subset C^{\infty}(M,M)$ is an open subset of $\Gamma(p_1)$. Moreover,
$s_0\in V_1$ since $A(s_0)=\Id$. The map $\alpha\mapsto\alpha\circ(p_2\circ\alpha)^{-1}$ is well-defined on $V_1$ and
smooth, because built of compositions and inversion of diffeomorphisms. On the other hand, the image of $\alpha$ is indeed a section of $p_2$, since $p_2\circ\alpha\circ(p_2\circ\alpha)^{-1}=\Id$.

Reversing the roles of $p_1$ and $p_2$ one construct similarly a map $\beta\mapsto\beta\circ(p_1\circ\beta)^{-1}$,
defined on some open set $V_2\subset\Gamma(p_2)$ containing $s_0$. It is then easily checked that, up to taking smaller
$V_1$ and $V_2$, this map and the previous one are inverse to each other.
\end{proof}

Let us now use Lemma \ref{lemme de roy} to construct a local inverse to $\tilde{\Psi}$ around $\pi_0$. Denote by
$p_M:M\times B\to M$ and $p_B:M\times B\to B$ the natural projections on the first and the second factor respectively, and $s_0:M\to M\times B$, $x\mapsto(x,\pi_0(x))$. It turns out that the graph of $s_0$ is a subbundle of $p_B$ precisely because $\pi_0:M \to \times B$ is a submersion. Therefore, according to Lemma \ref{lem-voisinage-pratique}, there exists a tubular neighbourhood
$X$ of the graph of $s_0$ whose projection $q$ is $p_B$-vertical.

We now define the two necessary fibrations for applying Lemma \ref{lemme de roy}. The first one $p_1:X\to M$  is simply
the restriction of $p_M$ to $X$. The second one $p_2:X\to M$ is given by $p_2=p_1\circ q$. Now, we see easily that
$s_0$, which is obviously a section of $p_1$, is also a section of $p_2$. This is because the projection $q$ acts as
the identity on the graph of $s_0$. Therefore, we can apply Lemma \ref{lemme de roy} and deduce that for any section
$s=(\Id,\pi)$ of $p_1$ close to $s_0$, there exists a diffeomorphism $f\in\Diff(M)$ (depending smoothly on $s$ and
hence on $\pi$), such that $s\circ f$ is a section of $p_2$. Note by the way that since $\Fib(M,B)$ is open in
$C^{\infty}(M,B)$, if $s$ is sufficiently close to $s_0$, then $\pi$ is a fibration.  On the other hand, by
construction any section of $p_2$ has the form $(g,\pi_0)$, with $g:M\to M$. Indeed, for any $x\in M$, the fibre
$p_2^{-1}(x)$ is contained in $M\times\{b\}$, with $b=\pi_0 (x)$. Thus, one has
$$(f,\pi\circ f)=s\circ f=(g,\pi_0).$$
This proves in particular the crucial property $\pi\circ f=\pi_0$.

We denote $[f]$ the image of a diffeomorphism $f\in\Diff_0(M)$ in the quotient
$\Diff_0(M)/(\mathcal{G}_{\pi_0}\cap\Diff_0(M))$. The map $\chi:\pi\mapsto [f]$ is our candidate for being the local
smooth inverse of $\tilde{\Psi}$ around $\pi_0$. Indeed, for any $\pi$ near $\pi_0$, one has
$$\tilde{\Psi}\left(\chi(\pi) \right) = \tilde{\Psi}\left([f] \right) = \Psi (f) =  \pi_0 \circ f^{-1}=\pi.$$
Conversely, for any $[f]$ close to $[\Id]$, $\chi\left(\tilde{\Psi}([f]) \right)$ is given by
$$ \chi\left(\Psi(f) \right) = \chi\left(\pi_0\circ f^{-1} \right) = [g],$$ where $g\in \Diff(M)$ satisfies $\pi_0\circ f^{-1} \circ g = \pi_0$. But this precisely means that $f^{-1} \circ g \in \mathcal{G}_{\pi_0}$, hence $[g]=[f]$. Now, around any element $\tilde{\Psi}(\phi)=\pi_0\circ\phi^{-1}$ in
the image of $\tilde{\Psi}$, we construct the local inverse of $\tilde{\Psi}$ by
$\pi\mapsto \phi \circ\left[\chi(\pi\circ\phi) \right]$. Here, the left composition of $\phi$ with a class $[f]$ is defined to be $[\phi \circ f]$, which makes sense because
 $\phi\circ  \left( f \circ \mathcal{G}_{\pi_0}\right)=\left(\phi\circ   f \right)\circ \mathcal{G}_{\pi_0}$. We leave to the reader to check that this is indeed a local inverse of $\tilde{\Psi}$.

\medskip
Let us now finish our proof. It remains to show that the image of $\tilde{\Psi}$ is exactly the connected component
of $\pi_0$ in $\Fib(M,B)$. First, the image of $\tilde{\Psi}$ is connected since $\Diff_0(M)$ is. Then, let $\pi_1$ be
an element in the connected component of $\pi_0$ in $\Fib(M,B)$, so that there exists a path $(\pi_t)_{t\in[0,1]}$
joining them. The compactness of $[0,1]$ allows us to find $0=t_0<\ldots<t_k=1$ such that for any $i=1,\ldots,k$,
$\pi_{t_i}$ belongs to the domain of the local inverse as constructed previously around $\pi_{t_{i-1}}$ (just replace
$\pi_0$ with $\pi_{t_{i-1}}$). Therefore, for any $i=1,\ldots,k$, there exists an element $\phi_i$ in $\Diff_0(M)$ such
that
$$\pi_{t_i}=\pi_{t_{i-1}}\circ\phi_i.$$
Thus $\pi_1=\pi_0\circ(\phi_k\circ\ldots\circ\phi_1)$ which implies that $\pi_1$ lies in the image of $\tilde{\Psi}$.
\end{proof}

\section{The principal bundle structure of $\Fib(M,B)$}\label{section fib is fibered}

This section is devoted to the proof of Theorem \ref{theorem fib is fibered}.

\begin{demo}
Let $\pi_0\in\Fib(M,B)$ and $\sigma$ be a global section of $\pi_0$, i.e., $\pi_0\circ\sigma=\Id_B$. We denote by
$\Fib_0(M,B)$ the connected component of $\pi_0$ in $\Fib(M,B)$ and consider the set $\mathcal{S}$ of all fibrations
for which $\sigma$ is a section:
$$\mathcal{S}=\{\pi\in\Fib_0(M,B)\,|\,\pi\circ\sigma=\Id_B\}.$$
\begin{figure}[!h]
\centering
\includegraphics[scale=0.6]{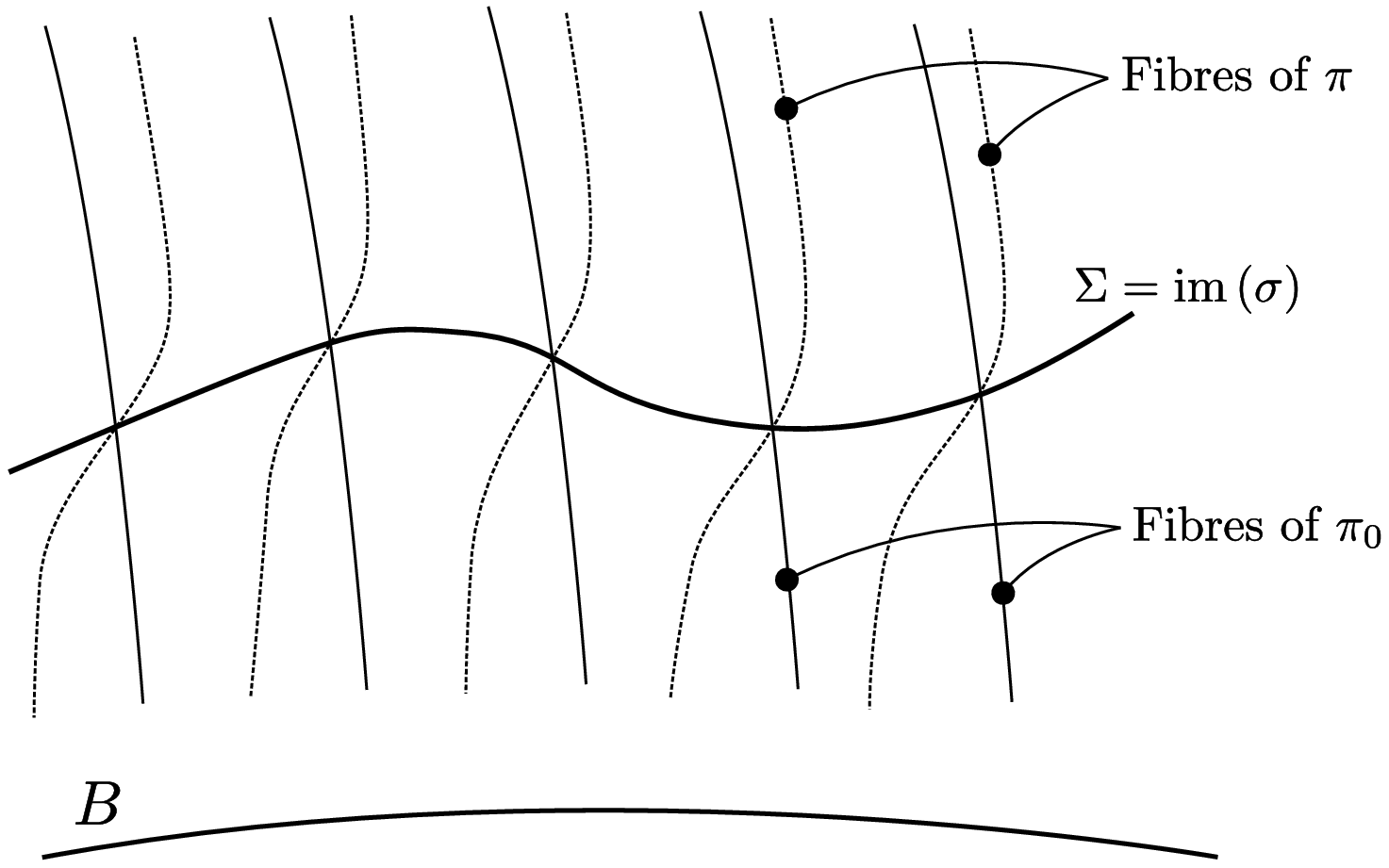}
\label{double_fibration}
\end{figure}

\medskip As a first step, let us prove that $\mathcal{S}$ is a Fréchet submanifold of $\Fib_0(M,B)$. To see this, first
remark that since $\Fib_0(M,B)$ is a homogeneous space (Theorem \ref{theorem diff/fib}), we can work in the
neighbourhood of $\pi_0$ without lack of generality. Then, remember that there exist an open set $\mathcal{U}$ in
$\Fib_0(M,B)$ containing $\pi_0$ and a Fréchet diffeomorphism $\Psi:\mathcal{U}\to \mathcal{V}$ onto a neighbourhood
$\mathcal{V}$ of the zero section in the Fréchet linear space of sections $\Gamma(\pi_0^*TB)$ of the vector bundle
$\pi_0^*TB$ over $M$. Moreover, this chart $\Psi$ can be chosen so that for all $x\in M$ and all $\pi\in \mathcal{U}$,
$\pi(x)=\pi_0(x)$ if and only if $\Psi(\pi)(x)=0$.

Now, if we denote by $\Sigma$ the submanifold of $M$ which is the image of the section $\sigma$, we see that for
$\pi\in \mathcal{U}$,
$$\pi\in\mathcal{S}\quad\Longleftrightarrow\quad\pi|_{\Sigma}=\pi_0|_{\Sigma}\quad\Longleftrightarrow\quad\Psi(\pi)|_{\Sigma}=0.$$
Let us consider the space $\Gamma_{\Sigma}(\pi_0^*TB)$ of sections of $\pi_0^*TB$ which vanish on $\Sigma$ and let us
show that it is a closed subspace of $\Gamma(\pi_0^*TB)$ with a closed complement.

\medskip First, $\Gamma_{\Sigma}(\pi_0^*TB)$ is obviously closed, so that it only remains to construct a closed
complement. To any vector field $\chi$ over $B$, we associate the section $\widetilde{\chi}$ of $\pi_0^*TB$ defined by
$$\widetilde{\chi}=\chi\circ\pi_0.$$
Now, we consider $\mathcal{F}=\{\widetilde{\chi}\,|\,\chi\in\Gamma(TB)\}$ the set of sections of $\pi_0^*TB$ which lift
vector fields on $B$. The set $\mathcal{F}$ is a closed linear subspace of $\Gamma(\pi_0^*TB)$. Moreover, for any
$s\in\Gamma(\pi_0^*TB)$, we can consider the vector field $s\circ\sigma$ over $B$. In the decomposition
$$s=\left(s-\widetilde{s\circ{\sigma}}\right)+\widetilde{s\circ{\sigma}},$$
the first term lies in $\Gamma_{\Sigma}(\pi_0^*TB)$ while the second is in $\mathcal{F}$, hence
$$\Gamma(\pi_0^*TB)=\Gamma_{\Sigma}(\pi_0^*TB)+\mathcal{F}.$$
Since moreover $\Gamma_{\Sigma}(\pi_0^*TB)$ and $\mathcal{F}$ have trivial intersection, the sum is direct and
$\mathcal{F}$ is a closed complement of $\Gamma_{\Sigma}(\pi_0^*TB)$. This proves our claim that
$\Gamma_{\Sigma}(\pi_0^*TB)$ is a closed subspace of $\Gamma(\pi_0^*TB)$, with closed complement.

\medskip
Consequently, the image of $\mathcal{S}\cap\mathcal{U}$ by $\Psi$ coincides with the intersection of $\mathcal{V}$ with
a closed complemented linear subspace of $\Gamma(\pi_0^*TB)$. This shows that $\mathcal{S}$ is a Fréchet submanifold of
$\Fib_0(M,B)$.

\medskip Let us now consider $\mathcal{W}=\{\pi\in\Fib_0(M,B)\,|\,\pi\circ\sigma\in\Diff_0(B)\}$. Since the right composition by
$\sigma$ is a continuous map, $\mathcal{W}$ is an open subset of $\Fib_0(M,B)$. It is moreover invariant under the
action of $\Diff_0(B)$ by composition on the left. We are going to show that there is an equivariant Fréchet
diffeomorphism
$$\mathcal{W}\,\simeq\,\Diff_0(B)\times\mathcal{S},$$
where the action of $\Diff_0(B)$ on the right hand side is given by composition on the left on the first factor.
Because of Lemma \ref{lemma trivialisation->principal bundle}, this will achieve the proof of Theorem \ref{theorem fib
is fibered}.

Let $\pi\in \mathcal{W}$ and set
$$\Phi(\pi)=(\pi\circ\sigma,(\pi\circ\sigma)^{-1}\circ\pi).$$
The first factor is obviously in $\Diff_0(B)$ and the second in $\mathcal{W}\cap\mathcal{S}$. Since composition and
inversion are smooth maps $\Phi$ is also a smooth map. Moreover, the fact that $\Phi$ is equivariant is easily checked:
for any $\phi\in\Diff_0(B)$, $\pi\in \mathcal{W}$, one has
\begin{align*} \Phi(\phi\circ\pi) &= (\phi\circ\pi\circ\sigma,
(\phi\circ\pi\circ\sigma)^{-1}\circ\phi\circ\pi)\\
&= (\phi\circ(\pi\circ\sigma),(\pi\circ\sigma)^{-1}\circ\pi).
\end{align*}
Finally, $\Phi$ has a smooth inverse given by the map $(\phi,\pi_S)\mapsto\phi\circ\pi_S$.
\end{demo}


\end{document}